\documentclass[12pt,oneside,a4paper]{amsart}
\usepackage{microtype}
\pdfoutput=1
\usepackage{amssymb,amsmath,mathtools}
\usepackage{hyperref}

\usepackage[T1]{fontenc}
\usepackage{newtxtext,newtxmath}
\mathcode`\;="303B

\newcommand{\pr}{\mathcal{P}}

\newcommand{\dd}{\mathrm{d}}
\newcommand{\grad}{\mathrm{grad}}
\renewcommand{\div}{\mathrm{div}}

\newcommand{\dVolg}{\mathrm{dVol}_{g}}
\newcommand{\dV}{\dVolg}
\renewcommand{\epsilon}{\varepsilon}
\renewcommand{\phi}{\varphi}

\newtheorem{lemma}{Lemma}
\newtheorem{theorem}{Theorem}

\newtheorem{example}{Example}

\makeatletter
\renewcommand\subsection{\@startsection{subsection}{2}%
  \z@{.5\linespacing\@plus.7\linespacing}{.1\linespacing}%
  {\normalfont\scshape}}
\makeatother

\title{JKO schemes with general transport costs}
\author{Cale Rankin} 
\address{Fields Institute and Department of Mathematics, University of Toronto} 
\email{cale.rankin@utoronto.ca}

\author{Ting-Kam Leonard Wong} 
\address{Department of Statistical Sciences, University of Toronto} 
\email{tkl.wong@utoronto.ca}
\thanks{The research of Leonard Wong is partially supported by an NSERC Discovery Grant (RGPIN-2019-04419).}

\date{\today}
\keywords{Optimal transport, JKO scheme, gradient flow, Fokker--Planck equation, Bregman divergence, Riemannian manifold}

\begin{document}

\begin{abstract}
We modify the JKO scheme, which is a time discretization of Wasserstein gradient flows, by replacing the Wasserstein distance with more general transport costs on manifolds. We show when the cost function has a mixed Hessian which defines a Riemannian metric, our modified JKO scheme converges under suitable conditions to the corresponding Riemannian Fokker--Planck equation. Thus on a Riemannian manifold one may replace the (squared) Riemannian distance with any cost function which induces the metric. Of interest is when the Riemannian distance is computationally intractable, but a suitable cost has a simple analytic expression.  We consider the Fokker--Planck equation on compact submanifolds with the Neumann boundary condition and on complete Riemannian manifolds with a finite drift condition. As an application we consider Hessian manifolds, taking as a cost the Bregman divergence. 
\end{abstract}

\maketitle

\section{Introduction}
\label{sec:introduction}

In this paper we modify the JKO scheme, which was named after the seminal work of Jordan, Kinderlehrer and Otto \cite{JKO98}, so as to encompass optimal transport costs other than the Wasserstein distance. We begin by reviewing the JKO scheme and explaining why one would want to modify it. 

The JKO scheme is a physically meaningful, iterative scheme for approximating the solutions to dissipative partial differential equations which may be interpreted as gradient flows in the Wasserstein space. Taking for granted some standard terminology (see Table \ref{ta:notation} and Section \ref{sec:preliminaries}), the scheme for the heat equation is as follows: Let $(M,g)$ be a Riemannian manifold, $\mathcal{W}_2$ the (quadratic) Wasserstein distance on $\mathcal{P}(M)$, and $\rho_0$ a probability density with respect to the Riemannian volume measure. Fix $\tau>0$, set $\rho^\tau_0 = \rho_0$ and recursively define
\begin{equation}
  \label{eq:jko-def}\tag{JKO}
  \rho^\tau_{k+1} := \text{argmin}_{\rho \in \mathcal{P}(M)} \int_{M} \log \rho \, \dd \rho + \frac{1}{2\tau}\mathcal{W}^2_2(\rho,\rho^\tau_{k}),
\end{equation}
where $\int_M \log \rho \dd \rho$ is the entropy functional. Provided minimizers exist we may define the piecewise constant interpolation
\begin{equation}
  \label{eq:interpolant-def}
  \rho^\tau(t) := \rho^\tau_k \quad \text{for } t\in((k-1)\tau,k\tau],
\end{equation}
with $\rho^\tau(0) =\rho_0$. Jordan, Kinderlehrer, and Otto \cite{JKO98} proposed the update rule \eqref{eq:jko-def} in the case $M = \mathbf{R}^n$ and showed that as $\tau \rightarrow 0$ the curves $\rho^\tau$ converge weakly to the solution of the heat equation (they studied more generally the Fokker--Planck equation which amounts to adding a drift term (see \eqref{eqn:drift})  in \eqref{eq:jko-def}). Their result spurred a tremendous amount of related work, including gradient flows on metric spaces, probabilistic interpretations, as well as numerical implementations and applications. For further details see \cite{adams2011large, AGS08, P15, santambrogio2017euclidean} and the references therein.

The Wasserstein distance appearing in \eqref{eq:jko-def} is defined by
\[ \mathcal{W}_2^2(\rho,\rho^\tau_{k}) = \inf_{\pi \in \Pi(\rho,\rho^\tau_k)} \int_{M \times M} d^2(x,y) \, \dd \pi(x,y), \]
where $\Pi(\rho,\rho^\tau_k)$ is the set of Borel probability measures on $M \times M$ with marginals $\rho$ and $\rho^\tau_k$, and $d$ is the Riemannian distance on $(M, g)$. The Wasserstein distance is a natural distance on the space of probability measures \cite{V03, V08}. However, for many manifolds one does not have simple expressions for the Riemannian distance, making numerical computation of the Wasserstein distance and JKO step expensive. In this paper we show if $c(x,y)$ is any (possibly asymmetric) cost function on $M \times M$ such that in coordinates
\begin{equation}
  \label{eq:cg-def}
  g_{ij}(x) = -c_{x^i,y^j}(x,x),
\end{equation}
then one may replace the term $\frac{1}{2}\mathcal{W}_2^2(\rho, \rho_k^{\tau})$ in \eqref{eq:jko-def} by the transport cost
\[ \mathcal{T}_{c}(\rho,\rho^\tau_{k}) = \inf_{\pi \in \Pi(\rho,\rho^\tau_k)} \int_{M \times M} c(x,y) \, \dd \pi(x,y),\]
and obtain a sequence of interpolants which converge to the same limit as the original scheme. An alternate viewpoint of this result is when \eqref{eq:cg-def} defines a Riemannian metric, the JKO scheme with $\mathcal{T}_{c}$ in place of $\frac{1}{2}\mathcal{W}_2^2$ yields approximate solutions to the Riemannian heat or Fokker--Planck equation. While the limiting PDE is the same, the advantage occurs when, for a given metric $g$, one may find a cost $c$ realizing \eqref{eq:cg-def} for which $c(x,y)$ is significantly easier to compute than $d(x,y)$. 

Such a case motivates our modification. As an application of our main result, stated in Theorem \ref{thm:main} below, we consider Hessian manifolds with a global chart, equivalently $M \subset \Omega$ where $\Omega$ is an open convex domain in $\mathbf{R}^n$ equipped with a Riemannian metric of the form $g_{ij}(x) = \phi_{ij}(x) := \varphi_{x^ix^j}(x)$ for $\phi$ a convex function. Letting $\varphi(x) = \frac{1}{2}|x|^2$ recovers the Euclidean metric. Hessian Riemannian metrics have been intensively studied in differential geometry \cite{shima2007geometry, SY97} and optimization theory \cite{alvarez2004hessian, wibisono2016variational}, yet there is no simple formula for the Riemannian distance except in special cases. However, the Bregman divergence
\begin{equation}
  \label{eq:breg-def}
  c(x,y) = B_{\phi}(x,y) := \phi(x)-\phi(y)-D\phi(y)\cdot(x-y),
\end{equation}
satisfies \eqref{eq:cg-def} and is straightforward to compute. Thus on such a Hessian manifold one may compute approximate solutions to the heat equation and Fokker--Planck equation using a JKO scheme with transport cost
\begin{equation}
  \label{eq:bw-div}
  \mathcal{B}_{\phi}(\rho,\rho^\tau_{k}) := \inf_{\pi \in \Pi(\rho,\rho^\tau_k)} \int_{\Omega \times \Omega} B_{\phi}(x,y) \, \dd \pi(x,y).
\end{equation}
The transport cost \eqref{eq:bw-div} was first considered by Carlier and Jimenez \cite{carlier2007monge} and, more recently, the authors \cite{RW23} studied the geometric structures on $\mathcal{P}(M)$ induced by \eqref{eq:bw-div}, which we call the Bregman--Wasserstein divergence.  We note that computation of \eqref{eq:bw-div} is, up to a coordinate transformation (see \cite[Proposition 3.2]{RW23} and \eqref{eqn:Bregman.mixed} below), equivalent to that of a Euclidean Wasserstein distance for which Brenier's theorem applies and many computational algorithms are now available. Since the Bregman divergence has numerous applications in statistics and applied mathematics (see for example \cite{A16, BMDG05, basseville2013divergence, AM03}), we believe the corresponding modified JKO scheme will be useful for studying related problems involving optimal transport. Regarding the use of general cost functions, we also note the recent work of L\'eger and Aubin-Frankowski \cite{leger2023} who developed a unifying perspective of gradient descent algorithms using generalized convex duality from optimal transport.

Before we provide background references, we state our main theorems. Our terminology and precise statements of our assumptions are provided in Section \ref{sec:preliminaries}. We will prove our main result for arbitrary costs on Riemannian manifolds. Then as a corollary we list our motivating case: the Bregman divergence on a subset of $\mathbf{R}^n$.
Let $(N,g)$ be an ambient Riemannian manifold with metric $g$, distance $d(x, y)$ and volume measure $\dVolg$, and let $M \subset N$ be an open submanifold. Assume $c:N \times N \rightarrow\mathbf{R}_+$ is a cost function satisfying \eqref{eq:cg-def}. Let $\rho_0$ be a probability density on $M$ (with respect to the Riemannian volume) which has finite entropy. Our modified JKO scheme for the Riemannian Fokker--Planck equation
\[ \partial_t\rho = \beta^{-1}  \Delta \rho + \div(\rho \nabla \psi),\]
where $\beta > 0$ is a constant and  $\psi: M \rightarrow \mathbf{R}_+$ is a smooth potential satisfying $\Vert \nabla \psi \Vert \leq C(1+\psi)$, is the recursion
\begin{equation}
  \label{eq:m-jko-def}
  \rho^\tau_{k+1} := \text{argmin}_{\rho \in \mathcal{P}(M)} \, \beta^{-1} \int_{M}  \log \rho \, \dd \rho + \int_{M} \psi \, \dd \rho + \frac{1}{\tau}\mathcal{T}_{c}(\rho,\rho^\tau_{k}).
\end{equation}
It will be shown that $\rho^\tau_{k+1}$ is well-defined as the unique minimizer. Let $\rho^\tau(t)$, $t \geq 0$, be the piecewise constant interpolants defined by \eqref{eq:interpolant-def}.

\begin{theorem}\label{thm:main}
Assume $c$ is a cost function on $N$ satisfying \eqref{eq:cg-def} as well as conditions A1, and A2. Assume further $N$ is $c$-convex with respect to itself and that there are constants $\lambda,\Lambda > 0$ such that $\lambda d^2(x,y) \leq c(x,y) \leq \Lambda d^2(x,y)$ for $x, y \in M$. Assume either:
  \begin{enumerate}
  \item[(i)] $M$ is pre-compact in $N$ with (non-empty) smooth boundary; or
    \item[(ii)] $M=N$ and $(M,g)$ is a complete Riemannian manifold with Ricci curvature bounded below.
    \end{enumerate}
     Then there exists a measurable function $\rho: [0,\infty) \times M \rightarrow\mathbf{R}_+$ such that for each $t \in [0,\infty)$
  \[ \rho^\tau(t) \rightharpoonup \rho(t) \text{ weakly in }L^1(M) := L^1(M; \dVolg),\]
  and $\rho$ solves the Fokker--Planck equation
  \begin{align*}
    \begin{cases}
      \partial_t\rho = \beta^{-1}  \Delta\rho + \div(\rho \nabla \psi) &\text{ in}\quad(0,\infty) \times M;\\
      \rho(0) = \rho_0 &\text{ on}\quad M.
    \end{cases}
  \end{align*}
Moreover, in case (i) $\rho$ is the unique classical solution satisfying the Neumann boundary condition, and in case (ii) $\rho$ is the unique classical solution with finite drift (meaning $\int_M \psi \dd \rho(t) < \infty$) and second moment. 
\end{theorem}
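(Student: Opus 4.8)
The plan is to follow the classical four-stage strategy of Jordan, Kinderlehrer and Otto, adapted to the general cost $c$: well-posedness of the scheme, a priori estimates with compactness, a discrete Euler--Lagrange relation and the identification of its limit, and finally regularity with uniqueness. First I would show each step of \eqref{eq:m-jko-def} has a unique minimizer by the direct method. The functional is bounded below (using $c \ge \lambda d^2 \ge 0$ together with a standard lower bound on the entropy) and lower semicontinuous for the weak topology, while the entropy supplies the equi-integrability needed to extract a minimizer; strict convexity of the entropy gives uniqueness. Conditions A1, A2 and $c$-convexity guarantee a Brenier--McCann theorem for the cost $c$, so the optimal plan for $\mathcal{T}_c$ between consecutive densities is induced by a map $T^\tau_k$, which is needed below.

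Next I would extract the standard estimates. Comparing $\rho^\tau_{k+1}$ with the competitor $\rho^\tau_k$ in \eqref{eq:m-jko-def} gives the one-step inequality $\tau^{-1}\mathcal{T}_c(\rho^\tau_{k+1},\rho^\tau_k)+E(\rho^\tau_{k+1}) \le E(\rho^\tau_k)$, with $E$ the entropy-plus-potential energy; summing over $k$ bounds $\sup_k E(\rho^\tau_k)$ and $\sum_k \mathcal{T}_c(\rho^\tau_{k+1},\rho^\tau_k)$. Since $\lambda d^2 \le c \le \Lambda d^2$, the latter is comparable to $\sum_k \mathcal{W}_2^2(\rho^\tau_{k+1},\rho^\tau_k)$, so the interpolants are uniformly $\tfrac12$-Hölder in $\mathcal{W}_2$ on compact time intervals, and the entropy bound yields $L^1$ equi-integrability via de la Vall\'ee-Poussin. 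A refined Aubin--Lions/Arzel\`a--Ascoli argument---in case (ii) supplemented by the Ricci lower bound and the finite second-moment and drift bounds to rule out escape of mass to infinity---then produces, along a subsequence $\tau \to 0$, a limit $\rho(t)$ with $\rho^\tau(t) \rightharpoonup \rho(t)$ weakly in $L^1(M)$ for each $t$.

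The heart of the argument, and the step I expect to be hardest, is the discrete Euler--Lagrange relation and the identification of its limit with the Riemannian operator. Perturbing the minimizer by the flow of a smooth compactly supported vector field $\xi$ (tangent to $\partial M$ in case (i)), the first variations of entropy and potential produce $\beta^{-1}\int_M \rho\,\divg\xi\,\dV$ and $\int_M \rho\,\langle\nabla\psi,\xi\rangle\,\dV$, while the first variation of $\mathcal{T}_c$ yields $\tau^{-1}\int \langle c_x(x,T^\tau_k(x)),\xi(x)\rangle\,\dd\rho^\tau_{k+1}$. The crucial observation is that consecutive densities are $O(\sqrt\tau)$-close in $\mathcal{W}_2$, so $T^\tau_k$ is close to the identity and a Taylor expansion of $c_x(x,y)$ about $y=x$ is governed precisely by the mixed Hessian $-c_{x^iy^j}(x,x)=g_{ij}(x)$ of \eqref{eq:cg-def}; hence $\tau^{-1}c_x(x,T^\tau_k(x))$ reproduces, in the limit, the Wasserstein metric tensor and the correct continuity-equation structure. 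The main technical work is to control the higher-order remainder uniformly, using $c \asymp d^2$ and the regularity afforded by A1--A2, and to justify exchanging the limit with the discrete time sum.

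Summing the discrete relation, rewriting the cost term as a discrete time derivative, and letting $\tau \to 0$ then yields the weak formulation $\int_0^\infty \int_M \rho\,(\partial_t\zeta + \beta^{-1}\Delta\zeta - \langle\nabla\psi,\nabla\zeta\rangle)\,\dV\,\dd t = -\int_M \rho_0\,\zeta(0)\,\dV$ for admissible test functions $\zeta$. It remains to promote this to a classical solution: the limiting equation is uniformly parabolic with principal part $\beta^{-1}\Delta$, so interior and boundary parabolic (Schauder) regularity upgrade $\rho$ to a smooth solution; in case (i) admissibility of perturbations tangent to $\partial M$ forces the Neumann condition, and in case (ii) the propagated second-moment and drift bounds give a solution with finite drift. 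Uniqueness of the classical solution in the respective class (Neumann, or finite drift and second moment) then identifies the limit independently of the subsequence, upgrading subsequential convergence to convergence of the full family $\rho^\tau(t)$ and completing the proof.
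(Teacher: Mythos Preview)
Your outline is correct and follows the same four-stage JKO strategy as the paper, but several of your technical choices differ from what the paper actually does, and one of them touches the paper's advertised novelty. For the Euler--Lagrange step you propose to Taylor-expand $c_x(x,y)$ about $y=x$, using $-c_{x^iy^j}(x,x)=g_{ij}(x)$ to identify the leading term with the metric. The paper instead introduces \emph{$c$-segments} (curves $y_t$ with $\nabla_xc(x,y_t)=t\,\nabla_xc(x,y)$), shows $\dot y_0=-\nabla_xc(x,y)$ via \eqref{eq:cg-def}, and Taylor-expands $\zeta(y_t)$ in $t$ to obtain directly
\[
|\zeta(y)-\zeta(x)+\langle\nabla_xc(x,y),\nabla\zeta(x)\rangle|\le K\,c(x,y)\quad\text{for all }x,y\in M.
\]
This is precisely where A1, A2 and $c$-convexity of $N$ enter. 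Your direct expansion should also work, but note that the displayed estimate must hold \emph{globally}: your phrase ``$T^\tau_k$ is close to the identity'' suggests pointwise smallness, whereas you only have $\int d^2(x,T(x))\,\dd\rho=O(\tau)$; the fix is to prove the quadratic remainder for all pairs (trivial on precompact $M$ once $d(x,y)$ is bounded below, and via a support-of-$\zeta$ splitting in the complete case). Two smaller differences: the paper never invokes an optimal transport \emph{map}---it works with optimal plans throughout, so your Brenier--McCann step is unnecessary---and for compactness in time it does not run Arzel\`a--Ascoli; it first obtains weak $L^1$ convergence on $[0,T]\times M$, identifies the limit as the unique smooth Fokker--Planck solution, and only then upgrades to weak $L^1(M)$ convergence for each fixed $t$ using the smoothness of the limit together with the estimate $\big|\int_M\zeta\,\dd\rho^\tau(t')-\int_M\zeta\,\dd\rho^\tau(t)\big|\le C\sqrt{|t-t'|+\tau}$.
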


In the above $\Delta$ and $\div$ are the Laplace-Beltrami operator and Riemannian divergence. Table \ref{ta:notation} outlines our notation.  We list as a corollary the case of most interest to us; further details, and examples, of the Bregman case are given in Section \ref{sec:Bregman}.

\begin{theorem} \label{thm:main2}
Assume $N=\Omega$ be an open convex subset of $\mathbf{R}^n$. Assume $c(x,y) = B_{\phi}(x,y)$ is a Bregman divergence, where $\phi : \Omega \rightarrow \mathbf{R}$ is smooth convex function such that $D^2 \varphi$ is positive definite on $\Omega$ and $D\varphi(\Omega)$ is  convex. Assume either:
  \begin{enumerate}
  \item[(i)] $M$ is an open and precompact subset of $N$ with smooth boundary; or
  \item[(ii)] $M = N$ and $(M,g)$, where $g_{ij}(x) = D_{ij}\varphi(x)$ under the Euclidean coordinates on $\Omega$, is a complete Riemannian manifold with Ricci curvature bounded below and there are constants $\lambda,\Lambda > 0$ such that $\lambda d^2(x,y) \leq B_{\phi}(x,y) \leq \Lambda d^2(x,y)$ for $x, y \in \Omega$.
  \end{enumerate}
    Then the same conclusions as in Theorem \ref{thm:main} hold.

\end{theorem}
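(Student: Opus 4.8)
The plan is to obtain Theorem~\ref{thm:main2} as a direct consequence of Theorem~\ref{thm:main}: every hypothesis of the general theorem is verified for the cost $c = B_\varphi$, after which the conclusion transfers verbatim. Since the statement is a corollary, the content lies entirely in these verifications.

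I would begin with the compatibility condition \eqref{eq:cg-def}. Differentiating \eqref{eq:breg-def} gives $c_{x^i}(x,y) = \varphi_i(x) - \varphi_i(y)$, hence $c_{x^i y^j}(x,y) = -\varphi_{ij}(y)$; on the diagonal this yields $-c_{x^i y^j}(x,x) = \varphi_{ij}(x) = D_{ij}\varphi(x)$, which is exactly the metric $g$. The same two derivatives deliver conditions A1 and A2 (the twist and non-degeneracy conditions): the map $y \mapsto D_x c(x,y) = D\varphi(x) - D\varphi(y)$ is injective because $D\varphi$ is injective when $D^2\varphi > 0$, and $\det[c_{x^i y^j}(x,y)] = \det(-D^2\varphi(y)) = (-1)^n \det D^2\varphi(y) \neq 0$ by strict convexity.

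Next I would treat the $c$-convexity of $N = \Omega$ with respect to itself. Because $D_x c(x,y) = D\varphi(x) - D\varphi(y)$, for each fixed $y$ the image $\{D_x c(x,y) : x \in \Omega\}$ is the translate $D\varphi(\Omega) - D\varphi(y)$, so under the Legendre change of variables $x \mapsto D\varphi(x)$ the $c$-convexity of $\Omega$ reduces to ordinary convexity of $\Omega$ and of $D\varphi(\Omega)$, both of which are among the hypotheses. This is the same change of variables that turns $\mathcal{B}_\varphi$ into a Euclidean Wasserstein cost (cf. \eqref{eqn:Bregman.mixed}), so the regularity assumptions of Theorem~\ref{thm:main} are met.

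It remains to supply the two-sided comparison $\lambda d^2 \leq c \leq \Lambda d^2$. In case (ii) this is assumed outright. In case (i), with $\overline{M}$ compact in $\Omega$, it follows from a standard two-regime estimate: a second-order Taylor expansion gives $c(x,y) = \tfrac12 g_{ij}(y)(x-y)^i(x-y)^j + o(|x-y|^2)$, comparable near the diagonal to $d^2(x,y) = g_{ij}(x)(x-y)^i(x-y)^j + o(|x-y|^2)$ since $g$ is continuous and positive definite, while away from the diagonal both $c$ and $d^2$ are continuous and strictly positive on the compact set $\overline{M} \times \overline{M}$, so their ratio is bounded above and below. With all hypotheses of Theorem~\ref{thm:main} verified, its conclusion applies and proves Theorem~\ref{thm:main2}. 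The only delicate point is the book-keeping in this last comparison---securing uniform constants up to the boundary and across the transition between the near-diagonal and far regimes---rather than any new idea.
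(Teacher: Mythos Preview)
Your proposal is correct and follows essentially the same route as the paper: verify \eqref{eq:cg-def}, A1, A2, $c$-convexity of $N$, and the two-sided comparison $\lambda d^2 \leq c \leq \Lambda d^2$, then invoke Theorem~\ref{thm:main}. One small point: in your $c$-convexity paragraph you fix $y$ and vary $x$, but the $c$-segment in the paper is defined by fixing $x$ and varying $y$; since $\nabla_x c(x,y) = D\varphi(x) - D\varphi(y)$, either way the required condition is convexity of $D\varphi(\Omega)$, so the conclusion stands. For the comparison in Case~(i), the paper bypasses your two-regime argument by noting that precompactness of $M$ in $\Omega$ gives $c_0 I \leq D^2\varphi \leq c_1 I$ on (the convex hull of) $\overline{M}$, whence $B_\varphi(x,y) \asymp |x-y|^2 \asymp d^2(x,y)$ directly from the integral form of the remainder; this is a bit cleaner but no more than a cosmetic difference from what you wrote.
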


To conclude this introduction we provide a bit more history, and mention other works which have modified the JKO scheme.

Gradient flow schemes in the Wasserstein space were first studied by Otto in conjunction with pattern formation in magnetic fluids \cite{Otto98}. The connection to the Fokker--Planck equation was realized by Jordan, Kinderlehrer, and Otto \cite{JKO98}. Their work indicated there should be a differential structure for which the heat equation is a gradient flow. This structure was found by Otto \cite{O01} and was further developed by Ambrosio,  Gigli and Savar\'{e} \cite{AGS08} among other authors. All this relied on Brenier's characterization of optimal transport maps as gradients \cite{B91} and McCann's displacement interpolations \cite{M97,M94}.

After this numerous authors modified the JKO scheme to study different PDE by changing the functional to be minimized, but still using the Wasserstein distance  (Santambrogio's works \cite{santambrogio2017euclidean} and \cite[\S 8.4.2]{S15} contain excellent expositions).
In work more relevant to ours, some authors have considered JKO schemes with different transport costs. These include Agueh \cite{Agueh05} who considered transport costs 
\begin{equation}
  \label{eq:agueh-dist}
  \mathcal{T}_c(\mu,\nu) = \inf_{\mu,\nu \in \Pi(\mu,\nu)} \int_{\mathbf{R}^n \times \mathbf{R}^n} c(x-y)\, \dd\pi(x,y),
\end{equation}
where $c(\cdot)$ is strictly convex and Figalli, Gangbo, and Yolcu \cite{FGY11} who considered an iteration scheme for cost functions of the form $c(x,y) = \inf L(\gamma,\dot{\gamma})$,  
where the infimum is over differentiable curves $\gamma$ with endpoints $\gamma(0) = x$ and $\gamma(1) = y$ and $L$ is a Lagrangian. Natile, Peletier, and Savar\'e \cite{NPS11} considered contractivity properties of \eqref{eq:agueh-dist} along solutions of the Fokker--Planck equation. Figalli and Gigli \cite{FN10} modified the transport cost so as to consider the heat equation with Dirichlet boundary conditions.  Finally Zhang \cite{Zhang07}, Erbar \cite{Erbar10} and Savare \cite{savare07} considered the JKO scheme with the Wasserstein distance on Riemannian manifolds with Ricci curvature bounded below. Recently, Deb et al.~\cite{pal2023} introduced a notion of mirror Wasserstein gradient flow and characterized it as a scaling limit of the Sinkhorn algorithm.

The above works have informed ours, and have advantages and disadvantages over ours. Indeed, Agueh's work as well as Figalli, Gangbo's and Yolcu's consider a broader class of PDEs. On the other hand their work is restricted to Euclidean space whereas we consider the corresponding Riemannian equations. In addition, the transport costs in the above works are symmetric in $x$ and $y$ and thus do not include our motivating example \eqref{eq:breg-def}.

Our paper is structured as follows. In Section \ref{sec:preliminaries} we give preliminaries from optimal transport and real analysis. In Section \ref{sec:diff-form} we compute differentiation formulas for the entropy, drift and the transport cost $\mathcal{T}_c$. Then in Sections \ref{sec:bw-bounded} and \ref{sec:bw-unbounded} we prove Theorem \ref{thm:main}. Our proofs follow Figalli and Glaudo's book \cite{FG21} as well as Jordan, Kinderlehrer and Otto's paper \cite{JKO98}. We use details in the Riemannian setting from Zhang \cite{Zhang07} and Erbar \cite{Erbar10}. The Bregman case is discussed in Section \ref{sec:Bregman}. In Section \ref{sec:conclusion} we conclude and discuss some future directions. Auxiliary results about weak solutions of the Fokker--Planck equation are given in Appendix \ref{sec:weak-form-heat}.

\begin{table}[h!]
\begin{center}
\begin{tabular}{l|l}
  Symbol & Meaning\\ \hline

  $(N, g = \langle \cdot , \cdot \rangle)$ & Ambient Riemannian manifold \\
  $d(x, y)$ & Riemannian distance on $N$ \\
  $M$ & Open submanifold of $N$ (either precompact or $N$) \\
   $\grad, \div$ & Riemannian gradient and divergence\\ 
   $D_i, D_{ij}$ & Euclidean partial derivatives (in coordinates)\\
   $u_{x^i}, \dot{u}$ & Partial derivative in coordinates and time derivative \\
     $\pr(M)$ & Space of (Borel) probability measures on $M$\\
      $c(x,y)$ & cost function which induces $g$ \\
$\mathcal{T}_c, \mathcal{W}_2$ & Optimal transport cost w.r.t.~$c$ and Wasserstein distance w.r.t.~$d$
\\
\end{tabular}
\medskip
\end{center}
\caption{Notation used in the paper}\label{ta:notation}
\end{table}  

\section{Preliminaries}
\label{sec:preliminaries}

In this section we state the required definitions and assumptions and state some required preliminary results. For general background in differential geometry and optimal transport we refer the reader to \cite{AGS08, O83, V08}. Throughout we adopt the summation convention.

\subsection{Preliminaries on optimal transport and the JKO scheme}
\label{sec:prel-optim-transp}
Let $(N,g)$ be an ambient Riemannian manifold with metric $g$. We denote the Riemannian distance on $N$ by $d(x, y)$ and the volume measure by $\dVolg$. Let $M \subset N$ be an open submanifold of $N$. When we are interested in the Neumann problem we take $\overline{M}$ to be compact with (nonempty) smooth boundary; when we are interested in the global problem we take $M = N$.  
 As stated in the introduction, whilst one may regard the cost as inducing the Riemannian metric, it is equivalent to assume that a Riemannian metric is given and the cost agrees with it (in the sense of \eqref{eq:cg-def}). By a cost function on $N$ we mean $c\in C^\infty(N \times N)$ satisfying $c(x,y) \geq 0$ with equality if and only if $x = y$. In statistics (see, e.g., \cite{A16}) such a function is sometimes called a divergence, or generalized distance, on $N$.  We assume that $c$ is compatible with $g$ in the sense that \eqref{eq:cg-def} holds for any coordinate system (applied to both $x$ and $y$). Since $c$ vanishes on the diagonal of $N \times N$, we also have
\[
g_{ij}(x) = c_{x^i, x^j}(x, y)|_{x = y} = c_{y^i, y^j}(x, y)|_{x = y} = -c_{x^j, y^i}(x, y)|_{x = y}.
\]
It can be shown that \eqref{eq:cg-def} implies the local approximation $c(x, y) \approx \frac{1}{2}d^2(x, y)$. For additional details and results see \cite[Section 11.3]{CU14} and \cite{WY19b}. While $c(x, y) = \frac{1}{2}d^2(x, y)$ recovers the metric via \eqref{eq:cg-def} and is currently the default choice,\footnote{We note, however, that $d^2(x, y)$ is generally not globally smooth, so our conditions do not cover completely the Riemannian case as in \cite{Erbar10, Zhang07}. On the other hand, the existence of a smooth cost that induces a given metric was proved in \cite{M93}.} our approach, which is based on \eqref{eq:cg-def}, allows a wide variety of cost functions which may be more suitable in particular applications.

We denote the space of Borel probability measures on $M$ by $\mathcal{P}(M)$. We say that $\rho \in \mathcal{P}(M)$ has finite second moment if for some, and thus any, $x_0 \in M$ we have $\int_M d^2(x, x_0) \, \dd \rho(x) < \infty$. We are interested in Wasserstein gradient flows, and their approximation schemes, for the entropy functional
\begin{align*}
  E(\mu) =
  \begin{cases}
    \int_{M} \log \rho \, \dd \rho &\text{ if } \dd \mu(x) = \rho(x) \dVolg(x),\\
                          +\infty &\text{ otherwise}.
  \end{cases}
\end{align*}
The constant factor $\beta > 0$ in \eqref{eq:m-jko-def} refers to the inverse temperature and is immaterial in the mathematical analysis, so in the rest of the paper we fix once and for all that $\beta = 1$. We will identify an absolutely continuous probability measure with respect to $\dVolg$ with its density. Gradient flows of $E$ yield the heat equation, but it is little extra work to couple the entropy with a drift term thereby obtaining the Fokker--Planck equation. Thus we define the drift term
\begin{equation} \label{eqn:drift} D(\mu) = \int_M \psi \, \dd \mu,\end{equation}
for $\psi \in C^\infty(M)$ a fixed nonnegative function satisfying the estimate
\[ \|\nabla \psi(x)\| \leq C(\psi(x)+1). \]

Let $\mu,\nu \in \mathcal{P}(M)$. The optimal transport cost with cost $c$ is
\[ \mathcal{T}_{c}(\mu,\nu) = \inf_{\pi \in \Pi(\mu,\nu)}\int_{M \times M}c(x,y) \, \dd \pi(x,y),\]
where $\Pi(\mu,\nu)$ is the set of probability measures on $M \times M$ with marginals $\mu$ and $\nu$. When $c = B_{\phi}$ is a Bregman divergence $\mathcal{T}_{c}(\mu,\nu) = \mathcal{B}_{\phi}(\mu,\nu)$ becomes the Bregman--Wasserstein divergence \eqref{eq:bw-div}.

Let us now assume we have a probability measure $\rho_0 \in \mathcal{P}(M)$ with finite second moment  and entropy. In particular, $\rho_0$ is absolutely continuous. For each $\tau > 0$ we recursively define
\[ \rho^\tau_{k+1} := \text{argmin}_{\rho \in \mathcal{P}(M)} \, E(\rho)+D(\rho) + \frac{1}{\tau}\mathcal{T}_{c}(\rho,\rho^\tau_k).\]
It is useful to set
\begin{equation}
  \label{eq:jk-def}
  J_k(\rho) := E(\rho)+D(\rho) + \frac{1}{\tau}\mathcal{T}_{c}(\rho,\rho^\tau_k).
\end{equation}
Provided this sequence is well defined, which we prove, one may define a piecewise constant curve of (absolutely continuous) probability measures by
\begin{align*}
  \rho^\tau(t) =  \begin{cases}
    \rho_0 & \text{ for }t=0,\\
    \rho^\tau_k &\text{ for }t \in \big((k-1)\tau,k\tau\big].
  \end{cases}
\end{align*}
Our main results, stated already in Theorems \ref{thm:main} and \ref{thm:main2}, are that these interpolants converge to solutions of the Fokker--Planck equation. 
\subsection{Preliminary analysis}
\label{sec:backgr-weak-conv}

We will repeatedly consider sequences of probability densities and need to prove some subsequence converges in a suitable sense. We achieve this via a modification of the  Dunford--Pettis theorem. For completeness, we also recall some standard definitions and results. 

Let $(X, \mu)$ be a measure space. We say that a sequence $(f_n)_{n \geq 1} \subset L^1(X; \mu)$ converges weakly (in $L^1$) to $f \in L^1(X; \mu)$, written  $f_n \rightharpoonup f$, provided $\lim_{n \rightarrow \infty}\int_{X} f_n k \, \dd \mu = \int_{X} f k \, \dd \mu$ for all $k \in L^{\infty}(X; \mu)$. The Dunford-Pettis theorem (see e.g., \cite[Lemma 5.13]{K21}) states that when $\mu(X) < \infty$, a collection $\mathcal{A} \subset L^1(X; \mu)$ is relatively weakly compact if and only if it is uniformly integrable, i.e., $\lim_{R \rightarrow +\infty} \sup_{f \in \mathcal{A}} \int_{\{f_n \geq R\}}|f_n| \, \dd\mu = 0$. We also recall that $\mathcal{A}$ is uniformly integrable if and only if there exists a superlinear function $h: [0,\infty] \rightarrow [0,\infty]$ such that $\sup_{f \in \mathcal{A}} \int_X h(f_n) \, \dd \mu < \infty$. Next, let $X$ be a metric space and $(\rho_n)_{n \geq 1} \subset \mathcal{P}(X)$. We say that $\rho_n$ converges narrowly to $\rho \in \mathcal{P}(X)$ if $\lim_{n \rightarrow \infty}\int_{X} k \, \dd \rho_n = \int_{X} k \, \dd \rho$ for all bounded continuous functions $k$ on $X$.

\begin{lemma}\label{lem:mod-dp}
Let $(X, d)$ be a proper metric space, i.e., all closed balls are compact, and let $\mu$ be a Radon measure on $X$. Let $(f_n)_{n \geq 1} \subset L^1(X;\mu)$ be such that $f_n \geq 0$ and $f_n \, \dd \mu$ are probability measures, i.e., $\int_{X}f_n\, \dd \mu = 1$. Also suppose there exists a superlinear $h:[0,\infty) \rightarrow [0,\infty)$ and $x_0 \in X$ such that
 \begin{align*}
  \sup_{n} \int_X h(f_n) \, \dd\mu &< +\infty, 
  \text{ and } \sup_{n} \int_X d^2(x,x_0) f_n \, \dd\mu < +\infty.
 \end{align*}
 Then the sequence has a weakly convergent subsequence $f_{k_j} \rightharpoonup f$ such that $\int_{X}f \, \dd \mu = 1$ and, in addition, $f_{k_j} \, d \mu$ converges narrowly to $f \, d \mu$. 
  \end{lemma}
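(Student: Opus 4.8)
The plan is to reduce to the classical Dunford--Pettis theorem by exhausting $X$ with compact balls, on which $\mu$ is finite, and then to use the uniform second moment bound to control the escape of mass to infinity. First I would set $B_R := \{x \in X : d(x,x_0) \leq R\}$, which is compact by properness, so $\mu(B_R) < \infty$ since $\mu$ is Radon. On each $B_R$ the restrictions $f_n|_{B_R}$ belong to $L^1(B_R;\mu)$ and satisfy $\sup_n \int_{B_R} h(f_n)\,\dd\mu \leq \sup_n \int_X h(f_n)\,\dd\mu < \infty$, so by the stated equivalence they are uniformly integrable and, by the Dunford--Pettis theorem applied to the finite measure space $(B_R,\mu)$, relatively weakly compact in $L^1(B_R;\mu)$. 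Taking $R = 1,2,3,\dots$ and extracting successively, a diagonal argument produces a single subsequence $(f_{k_j})$ and a nonnegative function $f$ with $f_{k_j}|_{B_R} \rightharpoonup f|_{B_R}$ weakly in $L^1(B_R;\mu)$ for every $R$. The limits on nested balls agree (weak limits are unique), so $f$ is well defined on all of $X$.

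Next I would compute the total mass using tightness. Testing the local weak convergence on $B_R$ against the constant function $1 \in L^\infty(B_R;\mu)$ gives $\int_{B_R} f\,\dd\mu = \lim_j \int_{B_R} f_{k_j}\,\dd\mu$. By Chebyshev's inequality and the second moment bound,
\[ \int_{X \setminus B_R} f_{k_j}\,\dd\mu \leq \frac{1}{R^2}\int_X d^2(x,x_0)\, f_{k_j}\,\dd\mu \leq \frac{C}{R^2} \]
uniformly in $j$, so $\int_{B_R} f_{k_j}\,\dd\mu \geq 1 - C R^{-2}$ and hence $\int_{B_R} f\,\dd\mu \in [1 - C R^{-2}, 1]$. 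Letting $R \to \infty$ and using monotone convergence yields $\int_X f\,\dd\mu = 1$; in particular $f \in L^1(X;\mu)$ and the tail estimate $\int_{X \setminus B_R} f\,\dd\mu \leq C R^{-2}$ holds.

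Finally I would upgrade the local convergence to weak convergence in $L^1(X;\mu)$ and deduce narrow convergence. For $k \in L^\infty(X;\mu)$ I split $\int_X f_{k_j} k\,\dd\mu$ into contributions over $B_R$ and $X \setminus B_R$; the first converges to $\int_{B_R} f k\,\dd\mu$ by local weak convergence, while the two tail contributions are each bounded by $\|k\|_\infty\, C R^{-2}$ using the tightness estimates for $f_{k_j}$ and for $f$. Letting $j \to \infty$ and then $R \to \infty$ gives $f_{k_j} \rightharpoonup f$ in $L^1(X;\mu)$. Since every bounded continuous function on $X$ lies in $L^\infty(X;\mu)$, this convergence in particular yields $\int_X k\, f_{k_j}\,\dd\mu \to \int_X k\, f\,\dd\mu$ for all bounded continuous $k$, which is exactly narrow convergence of $f_{k_j}\,\dd\mu$ to the probability measure $f\,\dd\mu$. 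The main obstacle is precisely that $\mu$ need not be finite, so Dunford--Pettis cannot be invoked globally; the essential device is that the uniform second moment bound supplies tightness, which simultaneously localizes the problem to compact balls, preserves the normalization $\int_X f\,\dd\mu = 1$, and lets the local weak limits be glued into a genuine weak $L^1$ limit on all of $X$.
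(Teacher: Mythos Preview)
Your argument is correct and follows essentially the same route as the paper: localize via Dunford--Pettis on the compact balls $B_R$ with a diagonal extraction, then use the second moment bound to supply tightness and glue the local limits into a global weak $L^1$ limit. The only cosmetic difference is that the paper invokes Prokhorov's theorem to obtain the narrow limit and then identifies it with $f\,\dd\mu$, whereas you bypass Prokhorov by a direct Chebyshev tail estimate and read off narrow convergence from weak $L^1$ convergence.
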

  \begin{proof}
  For completeness we provide a sketch of the proof. The superlinearity estimate and our assumptions on $(X, d)$ and $\mu$ allow us to apply Dunford--Pettis on each ball $B_N := B(x_0, N)$ where $x_0$ is arbitrary and $n \in \mathbf{N}$. Coupled with a diagonalization argument we obtain a limiting function $f \geq 0$ to which, up to a subsequence, $f_n \rightharpoonup f$ in $L^1(B_N)$ for every $N$. The second moment estimate implies the subsequence $f_n \, \dd \mu$ is tight as elements of $\mathcal{P}(X)$. Thus by Prokhorov's theorem (see \cite[Theorem 23.2]{K21b}) it converges narrowly to some probability measure. Testing on each $B_N$ shows that the narrow limit is $f \dd \mu$. Now it is standard to show that $f_n \rightharpoonup f$ in $L^1(X; \mu)$.
  \end{proof}

  \subsection{Lower semicontinuity and convexity of $J_k$}
\label{sec:lower-semic-conv}

As a combination of standard results we obtain the following lemma. 
\begin{lemma}\label{lem:jk-lsc}
Provided $\rho^{\tau}_k \in \mathcal{P}(M)$ satisfies $E(\rho^{\tau}_k) < \infty$ and $D(\rho^{\tau}_k) < \infty$, the functional $J_k$ on $\mathcal{P}(M)$ as defined in \eqref{eq:jk-def} is lower semicontinuous (with respect to narrow convergence) and strictly convex. 
\end{lemma}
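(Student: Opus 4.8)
The plan is to verify the two assertions---narrow lower semicontinuity and strict convexity---by decomposing $J_k = E + D + \tfrac1\tau\mathcal{T}_c(\cdot,\rho^\tau_k)$ and treating the three summands separately, using that lower semicontinuity is stable under sums and that strict convexity of a single summand (here $E$) together with convexity of the others upgrades the whole to strict convexity. The hypotheses $E(\rho^\tau_k)<\infty$ and $D(\rho^\tau_k)<\infty$ ensure $J_k(\rho^\tau_k)<\infty$ (since $\mathcal T_c(\rho^\tau_k,\rho^\tau_k)=0$), so $J_k$ is proper and the convexity claim is not vacuous.

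For convexity, which I would do first as it is the cleaner half, I work along linear interpolations $\rho_t=(1-t)\rho_0+t\rho_1$ of densities. The drift $D(\rho)=\int\psi\,\dd\rho$ is linear, hence convex. For $\mathcal T_c$, if $\pi_0\in\Pi(\rho_0,\rho^\tau_k)$ and $\pi_1\in\Pi(\rho_1,\rho^\tau_k)$ then $(1-t)\pi_0+t\pi_1\in\Pi(\rho_t,\rho^\tau_k)$, so taking $\pi_0,\pi_1$ optimal yields $\mathcal T_c(\rho_t,\rho^\tau_k)\le(1-t)\mathcal T_c(\rho_0,\rho^\tau_k)+t\mathcal T_c(\rho_1,\rho^\tau_k)$. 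Strictness comes entirely from $E$: since $s\mapsto s\log s$ is strictly convex, the pointwise inequality is strict wherever the densities of $\rho_0$ and $\rho_1$ differ, so $E(\rho_t)<(1-t)E(\rho_0)+tE(\rho_1)$ whenever $\rho_0\ne\rho_1$, and adding the three functionals makes $J_k$ strictly convex.

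For lower semicontinuity I express each summand as a supremum of narrowly continuous functionals. The drift is handled by writing $\psi=\sup_n(\psi\wedge n)$ and invoking monotone convergence together with the narrow continuity of $\rho\mapsto\int(\psi\wedge n)\,\dd\rho$. The transport term is the classical lower semicontinuity of optimal transport cost: given $\rho_j\rightharpoonup\rho$ narrowly, the optimal plans $\pi_j\in\Pi(\rho_j,\rho^\tau_k)$ are tight (their marginals are), so a subsequence converges narrowly to some $\pi\in\Pi(\rho,\rho^\tau_k)$, and since $c\ge0$ is continuous, $\int c\,\dd\pi\le\liminf_j\int c\,\dd\pi_j$. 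The entropy is the delicate term and is where I expect the main obstacle.

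In case (i), $\dVolg$ is finite and $s\log s\ge -e^{-1}$, so $E$ is bounded below and its lower semicontinuity follows from the standard theory of integral functionals with convex superlinear integrand. In case (ii) the volume measure is infinite and $E$ need not be bounded below on $\mathcal P(M)$ (spreading mass thinly drives $\int\rho\log\rho\,\dVolg$ to $-\infty$), so lower semicontinuity of $E$ in isolation fails. The resolution, which is the crux of the argument, is to use the coercivity of the remaining terms: the Ricci lower bound gives at most exponential volume growth, so for $W(x)=\alpha\,d^2(x,x_0)$ with $\alpha$ large the measure $\tilde m:=e^{-W}\dVolg$ is finite, and one has the identity $E(\rho)=H(\rho\mid\tilde m)-\int W\,\dd\rho$ with $H(\cdot\mid\tilde m)\ge-\log\tilde m(M)$ jointly lower semicontinuous; the subtracted second-moment term is then absorbed by the quadratic coercivity $c\ge\lambda d^2$ of $\mathcal T_c(\cdot,\rho^\tau_k)$ (using that $\rho^\tau_k$ has finite second moment), so that $J_k$ as a whole is bounded below and lower semicontinuous even though $E$ alone is not. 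Making this absorption quantitative, and in particular controlling the second moment along narrowly convergent sequences, is the step requiring the most care.
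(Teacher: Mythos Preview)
Your decomposition $J_k=E+D+\tfrac1\tau\mathcal T_c(\cdot,\rho_k^\tau)$ and the term-by-term verification is exactly the paper's strategy; the paper simply cites references (Villani for $\mathcal T_c$, Ambrosio--Gigli--Savar\'e and Villani for $E$ and $D$) rather than writing the arguments out. Your convexity paragraph and your treatment of $D$ and $\mathcal T_c$ are correct and essentially what those references contain.

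Where you go beyond the paper's terse proof is the entropy term in case~(ii). You are right that $E$ by itself is \emph{not} narrowly lower semicontinuous when $\dVolg(M)=\infty$: one can leak a vanishing fraction of mass onto a set of large volume and drive $E(\rho_n)$ strictly below $E(\rho)$ while $\rho_n\rightharpoonup\rho$ narrowly. Your remedy---rewrite $E(\rho)=H(\rho\mid\tilde m)-\int W\,\dd\rho$ with $\tilde m=e^{-W}\dVolg$ finite, use narrow l.s.c.\ of relative entropy with respect to a finite measure, and absorb $\int W\,\dd\rho$ into the quadratic coercivity of $\mathcal T_c$---is the standard and correct fix, and is implicitly what the cited references deliver. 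Two small adjustments: (a) you want $\alpha$ \emph{small}, not large. The Ricci lower bound gives at most exponential volume growth (cf.\ \eqref{eq:bg}), so $\int_M e^{-\alpha d^2(x,x_0)}\,\dVolg<\infty$ for \emph{every} $\alpha>0$; you then need $\alpha<\lambda/(2\tau)$ so that the subtracted $\alpha\int d^2(\cdot,x_0)\,\dd\rho$ is dominated by $\tfrac1\tau\mathcal T_c(\rho,\rho_k^\tau)\ge\tfrac{\lambda}{2\tau}M(\rho)-\tfrac{\lambda}{\tau}M(\rho_k^\tau)$ (this is \eqref{eq:moment-comparison}). (b) That absorption uses $M(\rho_k^\tau)<\infty$, which is not among the lemma's stated hypotheses (only $E,D<\infty$). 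In the paper's applications $\rho_k^\tau$ inherits a finite second moment from $\rho_0$, so this is harmless, but you should either add it as a hypothesis or note it explicitly.
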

\begin{proof}
  We provide references: The transport cost is lower semicontinuous and convex by \cite[Lemma 4.3 and Theorem 4.8]{V08}. The lower semicontinuity of the entropy and drift follows from \cite[Lemma 5.1.7]{AGS08} and \cite[Theorem 29.20]{V08}. Finally, the drift term is linear, and the entropy is strictly convex. 
\end{proof}

\subsection{On $c$-convex sets and $c$-segments}
\label{sec:c-convex-sets}

When working with general cost functions certain assumptions are standard \cite{MTW05,V08}. Namely, we assume:

\medskip

\noindent\textbf{A1. } For all $x \in N$ the map $y \mapsto -\nabla_{x}c(x,y)$ from $N$ to $T_{x}^*N$ is injective. Similarly, for all $y \in N$ the map $x \mapsto -\nabla_{y}c(x,y)$ from $N$ to $T_{x}^*N$ is injective.

\smallskip

\noindent
\textbf{A2. } For each $(x,y) \in N^2$ and choice of coordinate systems about $x$ and about $y$
\[ \det c_{x^i,y^j}(x,y) \neq 0.\]
The condition A2, unfortunately, implies the manifold $N$ is not compact \cite{BLMR23}.

A novel feature of our argument for dealing with general costs is estimates along $c$-segments. These are a class of curves, akin to line segments, introduced by Ma, Trudinger, and Wang \cite{MTW05} to study the regularity of optimal transport. In the Riemannian setting \cite[Definition 12.10]{V08} the $c$-segment with respect to $x$ joining $y_0$ to $y_1$ is the curve $(y_t)_{0 \leq t \leq 1}$ defined by
\begin{equation}
  \label{eq:c-seg}
  \nabla_{x}c(x,y_t) = (1-t)\nabla_{x}c(x,y_0) + t \nabla_{x}c(x,y_1). 
\end{equation}
When such a curve exists it is unique by A1 and is smooth by  A2. We also assume that for any $x, y_0, y_1 \in N$ there exists a $c$-segment with respect to $x$ joining $y_0$ to $y_1$. That is, $N$ is $c$-convex with respect to itself. 

\begin{lemma} [Initial velocity of $c$-segment] \label{lem:c-seg}
If $(y_t)_{0 \leq t \leq 1}$ is a $c$-segment with respect to $x$ from $x$ to $y$, then $\dot{y}_0 = -\nabla_{x}c(x,y)$.
\end{lemma}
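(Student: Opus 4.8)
The plan is a short direct computation from the defining relation \eqref{eq:c-seg}, the key structural point being that the hypothesis that the $c$-segment \emph{starts at $x$ itself} (so $y_0 = x$) causes one of the two boundary terms to vanish. First I would record a first-order vanishing of the cost on the diagonal: for each fixed $y$ the function $x \mapsto c(x,y)$ attains its global minimum value $0$ at the interior point $x=y$, by the standing assumption that $c(x,y) \geq 0$ with equality if and only if $x = y$. The first-order optimality condition then gives $c_{x^i}(x,y)|_{x=y} = 0$ for every $i$, that is $\nabla_x c(x,x) = 0$ in any coordinate system.

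Next I would substitute $y_0 = x$ and $y_1 = y$ into \eqref{eq:c-seg}. Because $\nabla_x c(x, y_0) = \nabla_x c(x,x) = 0$, the defining relation collapses to
\[ \nabla_x c(x, y_t) = t\, \nabla_x c(x, y), \qquad 0 \leq t \leq 1, \]
or in coordinates $c_{x^i}(x, y_t) = t\, c_{x^i}(x, y)$. I would then differentiate this identity in $t$ and evaluate at $t = 0$. Holding the base point $x$ fixed, the chain rule turns the left-hand side into $c_{x^i, y^j}(x, y_t)\, \dot{y}_t^{\,j}$, so at $t = 0$ (where $y_0 = x$) one obtains $c_{x^i, y^j}(x, x)\, \dot{y}_0^{\,j} = c_{x^i}(x, y)$.

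Finally I would invoke the compatibility relation \eqref{eq:cg-def}, namely $c_{x^i, y^j}(x,x) = -g_{ij}(x)$, which converts the previous equation into $g_{ij}(x)\, \dot{y}_0^{\,j} = -c_{x^i}(x, y)$. Raising the index with $g^{ij}$ identifies the tangent vector $\dot{y}_0$ with $-\nabla_x c(x,y)$ (the latter read as the Riemannian gradient $g^{ij} c_{x^j}(x,y)\,\partial_{x^i}$), which is exactly the claim.

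The argument is essentially a one-line computation, so there is no serious obstacle; the only two points requiring care are the first-order vanishing of $\nabla_x c$ on the diagonal — this is precisely what makes the $y_0$-term drop out and is the reason the lemma is stated for segments emanating from the base point $x$ — and the index bookkeeping through the metric that reinterprets the differential $c_{x^i}$ as the gradient. The differentiability of $t \mapsto y_t$ needed to justify the step at $t = 0$ is supplied by A2, as noted after \eqref{eq:c-seg}.
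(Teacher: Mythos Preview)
Your proposal is correct and follows essentially the same route as the paper: use the diagonal vanishing $\nabla_x c(x,x)=0$ to collapse \eqref{eq:c-seg} to $c_{x^i}(x,y_t)=t\,c_{x^i}(x,y)$, differentiate in $t$, and then invoke \eqref{eq:cg-def} at $t=0$ to convert the mixed Hessian into the metric and read off $\dot{y}_0=-\nabla_x c(x,y)$. The only cosmetic difference is that you obtain $\nabla_x c(x,x)=0$ by minimizing $x\mapsto c(x,y)$ at $x=y$, whereas the paper phrases it via $z\mapsto c(x,z)$; either way the needed vanishing on the diagonal follows from $c\geq 0$ with equality exactly when $x=y$.
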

\begin{proof}
  Since $z \mapsto c(x,z)$ is minimized at $x$, $\nabla_x c(x, x) = 0$ and \eqref{eq:c-seg} reduces to
\begin{equation} \label{eq:c-seq2}
\nabla_x c(x, y_t) = t \nabla_x c(x, y_1), \quad 0 \leq t \leq 1.
\end{equation}
In a coordinate neighbourhood $U$ about $x$ we have
\[
c_{x^i}(x,y_t) = t c_{x^i}(x,y).
\]
With $t$ so small that $y_t \in U$, differentiating with respect to $t$ implies
\begin{equation} \label{eq:c-seg-first}
c_{x^i, y^j}(x, y_t) \dot{y}_t^j = c_{x^i}(x, y).
\end{equation}
When $t = 0$ then $y_0 = x$,  and the identity $g_{ij}(x) = -c_{x^i, y^j}(x, x)$ implies
\begin{equation*}
\dot{y}_0^k = g^{ki}(x)c_{x^i,y^j}(x,x)\dot{y}^j_0 = -g^{ki}(x,x)c_{x^i}(x,y).
\end{equation*}
That is, $\dot{y}_0 = -\nabla_{x}c(x,y)$.
\end{proof}

\section{Differentiation formulas}
\label{sec:diff-form}
We collect in the following lemma some required differentiation formulas. We work under the setting of Section \ref{sec:prel-optim-transp}.

\begin{lemma}\label{lem:diff-identities}
  Assume either
  \begin{enumerate}
  \item[(i)] $\overline{M}$ is compact in $N$ with (non-empty) smooth boundary, $c $ is a cost function on $N$, and $\xi$ is a $C^{0,1}$ vector field on $\overline{M}$ tangential to $\partial M$, i.e., $\langle \xi,   \mathbf{n} \rangle = 0$ on $\partial M$ for $\mathbf{n}$ the outer unit normal; or
    \item[(ii)] $(M,g)$ is a complete Riemannian manifold, $c$ is a cost function on $M$, and $\xi$ is a $C^{0, 1}$ vector field with compact support.
    \end{enumerate}
    Then for $\Phi_t : M \rightarrow M$ the flow of $\xi$ (which is, for small time, a diffeomorphism), $\rho_0 \in \mathcal{P}(M)$ with finite entropy and second moment, $\rho_t := (\Phi_t)_{\#} \rho_0$, and $\mu \in \mathcal{P}(M)$ with $\mathcal{T}_c(\rho_0, \mu) < \infty$, the following differentiation formulas hold:
    \begin{align}
\label{eq:diff1}   \frac{\dd}{\dd t} \Big\vert_{t = 0} D(\rho_t) &=  \int_M \langle \nabla \psi, \xi \rangle \, \dd \rho_0,\\
\label{eq:diff2}   \frac{\dd}{\dd t} \Big\vert_{t = 0} E(\rho_t) &=  -\int_{M} \div \xi \, \dd\rho_0,\\
\label{eq:diff3}      \limsup_{t \rightarrow 0^+} \frac{1}{t}\left( \mathcal{T}_{c}(\rho_t,\mu) - \mathcal{T}_{c}(\rho_0,\mu)\right) &\leq \int_{M\times M} \langle\xi(x), \nabla_{x}c(x,y)\rangle \, \dd \pi(x,y), 
    \end{align}
    where $\pi \in \Pi(\rho_0, \mu)$ is optimal for the transport $\mathcal{T}_{c}(\rho_0,\mu)$. 
  \end{lemma}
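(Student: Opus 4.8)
The three identities are essentially independent, so the plan is to treat them separately, with the transport estimate \eqref{eq:diff3} as the only real difficulty. For the drift and entropy I would exploit the change of variables $\rho_t = (\Phi_t)_\# \rho_0$. For \eqref{eq:diff1}, $D(\rho_t) = \int_M \psi \circ \Phi_t \, \dd \rho_0$, and writing $\psi(\Phi_t(x)) - \psi(x) = \int_0^t \langle \nabla \psi(\Phi_s(x)), \xi(\Phi_s(x))\rangle \, \dd s$, the difference quotient converges pointwise to $\langle \nabla \psi(x), \xi(x)\rangle$; in case (i) $\psi, \nabla\psi$ are bounded on $\overline{M}$, and in case (ii) $\xi$ is compactly supported so the integrand vanishes off a fixed compact set, so in either case dominated convergence gives the result. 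For \eqref{eq:diff2}, the diffeomorphism $\Phi_t$ has Jacobian $J_t$ with respect to $\dVolg$ satisfying $\rho_t(\Phi_t(x)) J_t(x) = \rho_0(x)$ and the Liouville identity $\frac{\dd}{\dd t}\log J_t(x) = (\div \xi)(\Phi_t(x))$, hence $\log J_t(x) = \int_0^t (\div\xi)(\Phi_s(x))\,\dd s$. Therefore $E(\rho_t) = E(\rho_0) - \int_M \log J_t \, \dd\rho_0$, and since $\div\xi \in L^\infty$ the quotient $-\tfrac1t \log J_t$ is bounded and converges to $-\div\xi$, so dominated convergence yields \eqref{eq:diff2}.

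For the transport cost the plan is to build a competitor coupling. Let $\pi \in \Pi(\rho_0,\mu)$ be optimal for $\mathcal{T}_c(\rho_0,\mu)$ and set $\pi_t := (\Phi_t \times \mathrm{id})_\# \pi$, which lies in $\Pi(\rho_t,\mu)$ since its first marginal is $(\Phi_t)_\#\rho_0 = \rho_t$. Suboptimality of $\pi_t$ gives
\[
\mathcal{T}_c(\rho_t,\mu) - \mathcal{T}_c(\rho_0,\mu) \le \int_{M\times M}\big(c(\Phi_t(x),y) - c(x,y)\big)\,\dd\pi(x,y).
\]
Dividing by $t>0$, the integrand converges pointwise to $\frac{\dd}{\dd t}\big|_0 c(\Phi_t(x),y) = \langle \nabla_x c(x,y), \xi(x)\rangle$ because $\dot\Phi_0 = \xi$. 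Thus \eqref{eq:diff3} follows once one may pass the $\limsup$ inside the integral, which I would do with the reverse Fatou lemma; note that only one-sided (suboptimal) information is available, which is exactly why the conclusion is an inequality for the upper derivative rather than an identity.

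The domination needed for reverse Fatou is the main obstacle. Writing $\tfrac1t(c(\Phi_t(x),y) - c(x,y)) = \tfrac1t\int_0^t \langle \nabla_x c(\Phi_s(x),y), \xi(\Phi_s(x))\rangle\,\dd s$, everything reduces to bounding $\|\nabla_x c(z,y)\|$ for $z$ in a compact set and $y$ arbitrary. In case (i) this is trivial, since $\overline{M}\times\overline{M}$ is compact and $c$ is smooth, so the quotients are uniformly bounded and bounded convergence applies. Case (ii) is the genuine difficulty: $\xi$ is compactly supported but $y$ may be arbitrarily far, and the crude two-sided bound $\lambda d^2 \le c \le \Lambda d^2$ alone does not control a first derivative (its gap $(\Lambda-\lambda)d^2$ destroys the first-order information). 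Here I would invoke the $c$-segment estimate: by Lemma \ref{lem:c-seg}, $\nabla_x c(z,y) = -\dot y_0$ is the initial velocity of the $c$-segment from $z$ to $y$, and I expect to combine this with $\lambda d^2 \le c \le \Lambda d^2$ to obtain a linear growth bound $\|\nabla_x c(z,y)\| \le C_K(1 + d(z,y))$ for $z$ in any compact $K$. Granting this, the integrand is nonzero only when $\Phi_s(x) \in K = \mathrm{supp}\,\xi$, for which $z = \Phi_s(x) \in K$ and $d(\Phi_s(x),x) \le t_0 \|\xi\|_\infty$ for small $t \le t_0$; hence the quotients are dominated by $C(1 + d(x,y))\mathbf{1}_{K'}(x)$ on a fixed compact $K'$. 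This bound is $\pi$-integrable because $\mathcal{T}_c(\rho_0,\mu) < \infty$ with $c \ge \lambda d^2$ forces $\int d^2\,\dd\pi < \infty$, hence $\int d\,\dd\pi < \infty$. Reverse Fatou then delivers \eqref{eq:diff3}, so the decisive point is establishing the linear gradient growth of $c$ via $c$-segments.
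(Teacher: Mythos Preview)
Your treatment of \eqref{eq:diff1} and \eqref{eq:diff2} matches the paper's in substance; for the entropy the paper works in local coordinates with a partition of unity, whereas your Liouville-identity route $E(\rho_t)=E(\rho_0)-\int_M\log J_t\,\dd\rho_0$ is a cleaner coordinate-free packaging of the same computation.

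For \eqref{eq:diff3} the competitor coupling $\pi_t=(\Phi_t\times\mathrm{id})_\#\pi$ is exactly the paper's; the difference is only in how the limit is taken. The paper does \emph{not} use reverse Fatou: it applies Taylor's theorem to $t\mapsto c(\Phi_t(x),y)$, writes the error as $\tfrac{t^2}{2}R(\tau,x,y)$, and asserts that compactness of $\overline{M}$ in case (i), or of $\mathrm{supp}\,\xi$ in case (ii), together with $\xi\in C^{0,1}$ forces $R\le C$ uniformly in $(x,y)$. Since $\pi$ is a probability measure, dividing by $t$ gives \eqref{eq:diff3} immediately. This is shorter than your Fatou route, and the domination you seek (linear growth of $\|\nabla_xc(z,y)\|$ in $y$) is really the same obstacle as the uniform second-order remainder bound---you have correctly located the delicate point in case (ii) but not bypassed it. Note also that your proposed fix via Lemma~\ref{lem:c-seg} and the comparison $\lambda d^2\le c\le\Lambda d^2$ invokes hypotheses (A1, A2, $c$-convexity, the two-sided bound) that belong to Theorem~\ref{thm:main}, not to the present lemma as stated; the paper's proof of this lemma does not use $c$-segments at all.
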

  \begin{proof}
  Equation \eqref{eq:diff1} follows from the continuity equation and whilst \eqref{eq:diff2} appears in the literature \cite{Erbar10}, for completeness we include a proof. Consider
    \begin{equation}
      \label{eq:push-forward-cond}
      E(\rho_t) = \int_{M} ( \log \rho_t)\rho_t \dVolg = \int_{M} \log(\rho_t(\Phi_t(x))) \dd \rho_0.
    \end{equation}  
  The change of variables formula (\cite[pg. 12]{V08}) implies
    \begin{equation}
      \label{eq:cov}
      \rho_0(x) = \rho_t(\Phi_t(x))\mathcal{J}_t(x),
    \end{equation}
    for $\mathcal{J}_t$ the Jacobian determinant of $\Phi_t$.    Let $\{U_{\ell}\}$ be a finite covering of the support of $\xi$ with coordinate neighbourhoods. In each neighbourhood 
    \[ \mathcal{J}_t(x) = \lim_{\epsilon \rightarrow 0} \frac{\text{Vol}_g(\Phi_t(B_\epsilon(x)))}{\text{Vol}_g(B_\epsilon(x))} = \frac{\sqrt{\det g_{ij}(\Phi_t(x))} \, | \!\det D\Phi_t|}{\sqrt{\det g_{ij}(x)}}. \]
This gives 
  \begin{align*}
 \int_{U_{\ell}} \log(\rho_t(\Phi_t(x))) \dd \rho_0 = \int_{U_{\ell}} \log\left(\frac{\rho_0(x)\sqrt{\det g_{ij}(x)}}{\sqrt{\det g_{ij}(\Phi_t(x))} |\det D\Phi_t|}\right) \dd \rho_0.
  \end{align*}
Since $\xi \in C^{0, 1}$ differentiation under the integral is justified by the dominated convergence theorem. Thus
\begin{align*}
&\frac{\dd}{\dd t} \int_{U_{\ell}} \log(\rho_t(\Phi_t(x))) \rho_0 \dVolg \\
&= - \int_{U_{\ell}} \frac{1}{\sqrt{\det g_{ij}(\Phi_t)}} \dot{\Phi}_t^k D_k \sqrt{\det g_{ij}(\Phi_t)} + \text{tr}\big((D\Phi_t)^{-1}D\dot{\Phi_t}\big)  \dd \rho_0,
\end{align*}
  wehre $\text{tr}$ denotes the trace. Thus at time $0$, when $\Phi_0, D\Phi_0 = \mathrm{Id}$ and $\dot{\Phi}^k_0 = \xi^k$,
  \begin{align} \label{eq:ub-ent-diff}
-\int_{U_{\ell}} \frac{1}{\sqrt{\det g_{ij}}} D_k \Big( \xi^k \sqrt{\det g_{ij}}\Big) \dd \rho_0 = -\int_{U_\ell} \div \xi \, \dd \rho_0. 
  \end{align}
The last equality uses the coordinate expression of the Riemannian divergence as in \cite[Proposition 2.46]{L18}. From this we obtain \eqref{eq:diff2} using a partition of unity subordinate to our coordinate neighborhoods. 

    To prove \eqref{eq:diff3} let $\pi \in \Pi(\rho_0,\mu)$ be an optimal plan for $\mathcal{T}_c(\rho_0,\mu)$. Then $(\Phi_t, \mathrm{Id})_{\#}\pi \in \Pi(\rho_t,\mu)$ and so is a sub-optimal plan for $\mathcal{T}_c(\rho_t,\mu)$. Thus,
    \begin{align*}
      &\mathcal{T}_{c}(\rho_t,\mu) - \mathcal{T}_{c}(\rho_0,\mu)\\ &\leq  \int_{M \times M} c(x,y) \, \dd (\Phi_t,\mathrm{Id})_{\#}\pi(x,y) -  \int_{M \times M} c(x,y) \, \dd \pi(x,y) \\
      &= \int_{M \times M} c(\Phi_t(x),y) - c(x,y) \, \dd \pi(x,y).
    \end{align*}
    Applying Taylor's theorem to the function $t \mapsto c(\Phi_t(x),y)$ yields
    \[ \mathcal{T}_{c}(\rho_t,\mu) - \mathcal{T}_{c}(\rho_0,\mu) \leq \int_{M \times M}  t\langle \nabla_{x}c(x,y), \xi(x) \rangle + \frac{t^2}{2}R(\tau,x,y) \, \dd \pi(x,y).\]
Our assumption that either the space or the support of $\xi$ is compact along with $\xi \in C^{0,1}$ imply for $t > 0$ sufficiently small, $R(t,x, y) \leq C$ for $C > 0$ independent of $x$ and $y$. Thus, since $\pi$ is a probability measure, we obtain
    \[ \limsup_{t \rightarrow 0} \frac{1}{t}\left( \mathcal{T}_{c}(\rho_t,\mu) - \mathcal{T}_{c}(\rho_0,\mu)\right) \leq \int_M  \langle \nabla_{x}c(x,y), \xi(x) \rangle \, \dd \pi(x,y).   \]
This is \eqref{eq:diff3} and completes the proof. 
  \end{proof}

\section{Pre-compact submanifolds}
\label{sec:bw-bounded}
In this section we prove case (i) of Theorem \ref{thm:main}. There are three key steps: the existence of $\rho^\tau_{k+1}$ for $k=0,1,2,\dots$, a derivation of the minimality equation satisfied by $\rho^\tau_{k+1}$, and finally the derivation of the Fokker--Planck equation. In the following all hypotheses of Theorem \ref{thm:main}(i) are assumed.

\subsection{Existence of minimizer}
\label{sec:existence-minimizer-2}
Recall that $\rho_0$ is a density on $M$ with finite entropy.

\begin{lemma}\label{lem:well-posed}
For each $\tau > 0$ and $k = 0,1,2 \ldots$, the minimization problem \eqref{eq:m-jko-def} has a unique minimizer $\rho_{k+1}^\tau \in \mathcal{P}(M)$ which satisfies
  \begin{equation}
    \label{eq:ent-decr}
    E(\rho^\tau_{k+1})+D(\rho^\tau_{k+1}) \leq E(\rho^\tau_{k})+D(\rho^\tau_k),
  \end{equation}
  in particular each $\rho^\tau_k$ has finite entropy and drift. 
\end{lemma}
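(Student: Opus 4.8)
The plan is to argue by induction on $k$, at each stage producing $\rho^\tau_{k+1}$ by the direct method of the calculus of variations applied to the functional $J_k$ of \eqref{eq:jk-def}. The induction hypothesis is that $\rho^\tau_k$ has finite entropy and drift; the base case $k=0$ holds since $\rho_0$ has finite entropy by assumption and finite drift because $\psi$ is bounded on the compact set $\overline{M}$, so that $D(\rho) \leq \sup_{\overline{M}}\psi < \infty$ for every $\rho \in \mathcal{P}(M)$. Under this hypothesis Lemma \ref{lem:jk-lsc} guarantees that $J_k$ is narrowly lower semicontinuous and strictly convex. Moreover, using $\rho^\tau_k$ itself as a competitor together with the identity coupling $(\mathrm{id},\mathrm{id})_{\#}\rho^\tau_k$, whose cost is $\int_M c(x,x)\,\dd\rho^\tau_k = 0$ since $c$ vanishes on the diagonal, gives $\mathcal{T}_c(\rho^\tau_k,\rho^\tau_k)=0$ and hence $J_k(\rho^\tau_k) = E(\rho^\tau_k)+D(\rho^\tau_k) < \infty$, so the infimum of $J_k$ is finite.

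For existence I would take a minimizing sequence $(\rho_n) \subset \mathcal{P}(M)$. Because $\psi \geq 0$ and $\mathcal{T}_c \geq 0$, boundedness of $J_k(\rho_n)$ forces $E(\rho_n) \leq C$; conversely Jensen's inequality on the finite-volume space gives the uniform lower bound $E(\rho) \geq -\log \mathrm{Vol}_g(M)$. Since $s \mapsto s\log s$ is superlinear, the uniform upper bound on $E(\rho_n)$ yields uniform integrability of the densities, so the Dunford--Pettis theorem (applicable because $\mathrm{Vol}_g(M) < \infty$) provides a subsequence with $\rho_n \rightharpoonup \rho$ weakly in $L^1(M)$. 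Testing against the constant $1 \in L^\infty$ shows $\int_M \rho \, \dVolg = 1$, so $\rho \in \mathcal{P}(M)$, and weak $L^1$ convergence of densities implies narrow convergence of the associated measures, since continuous bounded functions on $\overline{M}$ lie in $L^\infty$. Lower semicontinuity from Lemma \ref{lem:jk-lsc} then gives $J_k(\rho) \leq \liminf_n J_k(\rho_n) = \inf J_k$, so $\rho =: \rho^\tau_{k+1}$ is a minimizer. Uniqueness is immediate from strict convexity, as two distinct minimizers would make $J_k$ strictly smaller at their midpoint.

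It remains to establish \eqref{eq:ent-decr} and close the induction. Minimality of $\rho^\tau_{k+1}$ and nonnegativity of $\mathcal{T}_c$ together with the computation $J_k(\rho^\tau_k) = E(\rho^\tau_k)+D(\rho^\tau_k)$ above yield
\[
E(\rho^\tau_{k+1}) + D(\rho^\tau_{k+1}) \leq J_k(\rho^\tau_{k+1}) \leq J_k(\rho^\tau_k) = E(\rho^\tau_k) + D(\rho^\tau_k),
\]
which is \eqref{eq:ent-decr}. Since $E \geq -\log \mathrm{Vol}_g(M)$ and $D \geq 0$ are each bounded below, finiteness of the right-hand side forces both $E(\rho^\tau_{k+1})$ and $D(\rho^\tau_{k+1})$ to be finite, restoring the induction hypothesis for $k+1$ and completing the argument.

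The only genuinely technical point is the compactness of minimizing sequences, namely producing a limit that is still a probability density of finite entropy rather than a measure charging the boundary $\partial M$ or losing mass. Here precompactness of $M$ does the work twice over: it makes $\mathrm{Vol}_g(M)$ finite, so that both Dunford--Pettis and the Jensen lower bound on the entropy apply, and it makes $\psi$ bounded, trivializing the drift throughout; the weak $L^1$ limit is automatically absolutely continuous, so no mass escapes to $\partial M$. I therefore expect no serious obstruction in case (i). The analogous compactness in the noncompact case (ii) is where the second-moment control of Lemma \ref{lem:mod-dp} and the growth hypotheses on $\psi$ become essential.
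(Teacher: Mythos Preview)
Your proof is correct and follows essentially the same approach as the paper's: induction on $k$, lower bound on $J_k$ from finiteness of $\mathrm{Vol}_g(M)$, a minimizing sequence made relatively compact via Dunford--Pettis using the superlinearity of $s\log s$, and then lower semicontinuity plus strict convexity from Lemma~\ref{lem:jk-lsc} to conclude. The only cosmetic differences are that the paper uses the pointwise bound $\rho\log\rho \geq -1$ (rather than your Jensen inequality) and routes the compactness through Lemma~\ref{lem:mod-dp} instead of invoking Dunford--Pettis directly, but in the precompact case these are equivalent.
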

\begin{proof}
By induction we prove the existence of $\rho_{k+1}^{\tau}$ when $\rho^\tau_k$ is well defined. Recall $\rho^\tau_{k+1} = \text{argmin}_{\rho \in \mathcal{P}(M)}J_k(\rho)$, where
  \[ J_k(\rho)=\int_{M} \log \rho \, \dd \rho + \int_{M} \psi \, \dd \rho + \frac{1}{\tau}\mathcal{T}_{c}(\rho,\rho^\tau_{k}).\]
Since $M$ is precompact in $N$, $\text{Vol}_g(M)$ is finite. Since also $\rho \log \rho \geq -1$ and $\psi \geq 0$, $J_k$ is bounded below. Thus we may take a minimizing sequence $\{\rho^{\tau}_{k,i}\}_{i \geq 1}$ satisfying
  \[ \lim_{i \rightarrow \infty}J_k(\rho^{\tau,k}_{i}) = \inf_{\rho \in \mathcal{P}(M)}J_k(\rho). \]
We assume, without loss of generality, that $C = \sup_i J_k(\rho^{\tau}_{k,i}) < \infty$.
  
  We extract a weakly convergent subsequence via Lemma \ref{lem:mod-dp}. Indeed, since $M$ is bounded we have trivially $\sup_i\int_M d(x, x_0)^2 \dd \rho_{k, i}^{\tau} < \infty$. In addition, 
  \begin{align*}
    C \geq J_k(\rho^\tau_{k,i}) &= \int_{M} \psi \, \dd \rho^\tau_{k,i} + \int_{M}  \log \rho^\tau_{k,i} \, \dd \rho^\tau_{k,i} + \frac{1}{\tau}\mathcal{T}_{c}(\rho^\tau_{k,i},\rho^\tau_{k})\\
                       &\geq \int_M (\rho^\tau_{k,i} \log \rho^\tau_{k,i})_{+} \dVolg- \int_M (\rho^\tau_{k,i} \log \rho^\tau_{k,i})_{-} \dVolg\\
    &\geq  \int_M (\rho^\tau_{k,i} \log \rho^\tau_{k,i})_{+} -\text{Vol}_{g}(M). 
  \end{align*}
  Since $z \mapsto (z \log z)_+$ is superlinear, Lemma \ref{lem:mod-dp} applies. Thus up to a subsequence $\rho^\tau_{k,i}$ converges narrowly to some absolutely continuous probability measure $\rho^\tau_{k+1}$ which, via the the weak lower semicontinuity and strict convexity of $J_k$ (Lemma \ref{lem:jk-lsc}), is the unique minimizer.
  
  To conclude the proof, we note since $\rho^\tau_{k+1}$ minimizes $J_k$, \eqref{eq:ent-decr} is simply a restatement of $J_k(\rho^\tau_{k+1}) \leq J_k(\rho^\tau_k)$. 
  \end{proof}

\subsection{Minimality condition}
\label{sec:minimality-condition-1}

Using Lemma \ref{lem:diff-identities} it is straightforward to find the Euler--Lagrange equation satisfied by $\rho^\tau_{k+1}$. 

\begin{lemma}\label{lem:min-cond}
Consider $\rho^{\tau}_{k+1}$ defined by \eqref{eq:m-jko-def}. Then for any $C^{0,1}(\overline{M})$ vector field $\xi$ tangential to $\partial M$ and $\pi$ an optimal plan for $\mathcal{T}_{c}(\rho^\tau_{k+1},\rho^\tau_k)$, we have
  \begin{equation}
    \label{eq:min-cond}
    \int_{M}\div \xi  - \langle\nabla \psi, \xi\rangle\, \dd \rho^\tau_{k+1} = \frac{1}{\tau}\int_{M \times M }\langle   \xi(x), \nabla_{x}c(x,y) \rangle\, \dd\pi(x, y).
  \end{equation}
\end{lemma}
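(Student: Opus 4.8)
The plan is to use the minimality of $\rho^\tau_{k+1}$ together with the three differentiation formulas of Lemma \ref{lem:diff-identities}, applied to a flow perturbation, and then to upgrade the resulting one-sided inequality to an equality by exploiting the freedom to replace $\xi$ by $-\xi$. First I would fix a $C^{0,1}(\overline M)$ vector field $\xi$ tangential to $\partial M$ and an optimal plan $\pi$ for $\mathcal{T}_c(\rho^\tau_{k+1}, \rho^\tau_k)$, which is finite since $c \leq \Lambda d^2$ and $\overline M$ is compact. Let $\Phi_t$ be the flow of $\xi$. Because $\xi$ is tangential to $\partial M$, its flow maps $M$ onto itself for small $t$, so $\rho_t := (\Phi_t)_\# \rho^\tau_{k+1}$ is again an element of $\mathcal{P}(M)$ and hence admissible in \eqref{eq:m-jko-def}. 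By Lemma \ref{lem:well-posed} $\rho^\tau_{k+1}$ has finite entropy and drift, and precompactness of $M$ gives finite second moment, so Lemma \ref{lem:diff-identities}(i) applies with $\rho_0 = \rho^\tau_{k+1}$ and $\mu = \rho^\tau_k$.

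Since $\rho^\tau_{k+1}$ minimizes $J_k$ and $\rho_t$ is admissible, we have $J_k(\rho_t) \geq J_k(\rho_0)$ for all small $t \geq 0$, hence $\limsup_{t\to 0^+}\tfrac1t\big(J_k(\rho_t) - J_k(\rho_0)\big) \geq 0$. Writing $J_k = E + D + \tfrac1\tau \mathcal{T}_c(\cdot, \rho^\tau_k)$, the entropy and drift terms have genuine derivatives given by \eqref{eq:diff2} and \eqref{eq:diff1}, while the transport term is bounded above by \eqref{eq:diff3} for the chosen $\pi$. Using that the $\limsup$ of a sum is at most the sum of the two exact limits plus the $\limsup$ of the transport difference quotient, I obtain
\begin{equation*}
0 \leq -\int_M \div \xi \, \dd\rho^\tau_{k+1} + \int_M \langle \nabla \psi, \xi \rangle \, \dd\rho^\tau_{k+1} + \frac1\tau \int_{M\times M}\langle \xi(x), \nabla_{x} c(x,y)\rangle \, \dd\pi(x,y),
\end{equation*}
which after rearrangement is exactly the inequality ``$\leq$'' in \eqref{eq:min-cond}.

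To obtain equality I would repeat the argument with $-\xi$ in place of $\xi$. This field is still $C^{0,1}$ and tangential to $\partial M$, and the optimal plan $\pi$ for $\mathcal{T}_c(\rho^\tau_{k+1}, \rho^\tau_k)$ is unchanged, since it does not depend on the perturbation direction. Because $\div(-\xi) = -\div \xi$ and $\langle -\xi, \cdot \rangle = -\langle \xi, \cdot\rangle$, the new inequality is the reverse ``$\geq$'' of \eqref{eq:min-cond}; the two together yield the claimed identity.

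The main obstacle is that Lemma \ref{lem:diff-identities} furnishes only a one-sided upper bound for the transport cost (a $\limsup$, not an honest derivative), so one cannot differentiate $J_k$ directly and conclude that its derivative vanishes. The equality must instead be recovered by playing the $\xi$ and $-\xi$ perturbations against the \emph{same} fixed optimal plan $\pi$; this is legitimate precisely because \eqref{eq:diff3} holds for any optimal plan, so the identical $\pi$ serves in both directions. A secondary point needing care is confirming that the tangency of $\xi$ genuinely keeps $\rho_t$ supported in $M$, so that $\rho_t$ remains a valid competitor in the constrained minimization.
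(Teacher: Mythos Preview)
Your proposal is correct and follows essentially the same route as the paper: perturb $\rho^\tau_{k+1}$ by the flow of $\xi$, use minimality together with the differentiation formulas of Lemma~\ref{lem:diff-identities} to obtain one inequality, then replace $\xi$ by $-\xi$ (with the same optimal plan $\pi$) to obtain the reverse inequality. Your added remarks on why the same $\pi$ may be used in both directions and on the tangency condition are sound and only make the argument more explicit than the paper's version.
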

\begin{proof}
As in Lemma \ref{lem:diff-identities} we let $\Phi_t$ denote the flow of $\xi$, and set\footnote{Thus, in this proof, $\rho_0 := \rho^\tau_{k+1}$, and is not the initial condition.} $\rho_t = (\Phi_t)_{\#} \rho^\tau_{k+1}$. 
Minimality implies for $t > 0$,
\[ 0 \leq \frac{1}{t}\big(J_k(\rho_t) - J_k(\rho_0) \big).\]
Taking a limit supremum as $t \rightarrow 0^+$ and employing Lemma \ref{lem:diff-identities} yields
\[ 0 \leq - \int_{M} \div \xi \, \dd \rho_0 +  \int_M \langle \nabla \psi, \xi \rangle \, \dd \rho_0 +  \frac{1}{\tau}\int_{M\times M}\langle   \nabla_{x}c(x,y), \xi(x) \rangle \, \dd \pi(x,y). \]
Since the same argument applies with $-\xi$ in place of $\xi$, we obtain the opposite inequality and subsequently \eqref{eq:min-cond}.
\end{proof}

\subsection{Derivation of the Fokker--Planck equation}
\label{sec:deriv-heat-equat}
Now we may complete the proof of Case (i) of Theorem \ref{thm:main}.
\begin{proof}[Proof of Theorem \ref{thm:main}(i)]
 \textit{Step 1. (Weak convergence on $[0,T] \times M$)} As shown in Lemma \ref{lem:well-posed}, we have that $E(\rho^\tau_k)+D(\rho^\tau_k)$ is decreasing in $k$. The entropy is bounded below by $-\text{Vol}_{g}(M)$. Thus for any $T > 0$,
 \begin{equation*} 
 \int_0^T \int_{M} (\rho^\tau\log \rho ^\tau)_{+}  \, \dVolg \dd t  \leq T \left(E(\rho_0)+\text{Vol}(M)+\sup_M |\psi|\right) < \infty.
\end{equation*}
Lemma \ref{lem:mod-dp} and a diagonalization argument yield $\rho:[0,\infty)\times M \rightarrow \mathbf{R}$ and a subsequence, still denoted by $\rho^{\tau}$, such that $\rho^\tau$ converges to $\rho$ (narrowly and weakly) on $[0,T]\times M$ for every $T>0$.
 
 To strengthen this to weak convergence for every $t$, we use that $\rho$ is smooth (in the sense of Lemma \ref{lem:compact-heat-eqn}). This follows from what we now prove: that $\rho$ is the unique distributional solution of the Fokker--Planck equation. Uniqueness of the limit implies that convergence holds without taking subsequences.

  \medskip
  
  \textit{Step 2. (Derivation of distributional form of the Fokker--Planck equation)}  Let $\zeta \in C^\infty(\overline{M})$ satisfy the Neumann condition $\langle \nabla \zeta(x), {\bf n}(x)\rangle = 0$ on $\partial M$.

We aim to derive the discretized version of the Fokker--Planck equation (\eqref{eq:heat-eq-deriv1} below). Fix $x,y \in M$ and let $(y_t)_{0 \leq t \leq 1}$ be the $c$-segment with respect to $x$ joining $x$ to $y$, so that $\nabla_x c(x, y_t) = t \nabla_x c(x, y)$ by \eqref{eq:c-seq2}. Lemma \ref{lem:c-seg} implies $\dot{y}_0 = - \nabla_x c(x, y)$. Taylor's theorem applied to $h(t) = \zeta(y_t)$ yields
\begin{equation}
\label{eq:gc-diff-identity}
 \zeta(y) - \zeta(x) + \langle \nabla_x c(x, y), \nabla \zeta(x) \rangle = R(x, y) := \int_0^1(1-t)h''(t) \, \dd t.
\end{equation}

We claim $R$ satisfies an estimate
\begin{equation}
  \label{eq:r-goal}
  R(x, y) \leq K c(x,y),
\end{equation}
for a constant $K > 0$ independent of $x,y \in M$. In coordinates about $y_t$ we have
\begin{equation}
  \label{eq:hdd}
  h''(t) = \zeta_{kl}(y_t)\dot{y}^k_t\dot{y}^l_t +\zeta_{a}(y_t)  \ddot{y}^a_t.
\end{equation}
Returning to \eqref{eq:c-seg-first} and differentiating again (though now $x,y$ may be in separate coordinate neighbourhoods) we obtain
\begin{equation}
  \label{eq:c-seg-second}
  c_{x^i,y^jy^k}(x,y_t)\dot{y}^j_t\dot{y}^k_t + c_{x^i,y^j}(x,y_t)\ddot{y}^j_t = 0. 
\end{equation}
From \eqref{eq:hdd}, \eqref{eq:c-seg-second}, and condition A2 we have
\begin{align*}
  h''(t) &= \zeta_{kl}(y_t)\dot{y}^k_t\dot{y}^l_t -\zeta_{a}(y_t)c^{a,i}(x,y_t)c_{i,jk}(x,y_t)\dot{y}^j_t\dot{y}^k_t\\
  &\leq K \Vert \nabla_{x}c(x,y)\Vert^2,
\end{align*}
for a constant $K$ initially depending on the choices of coordinate neighbourhoods about $x$ and $y_t$. But since we may cover $M$ by finitely many coordinate neighbourhoods, by taking the maximum of the resulting constants $K$, and noting that the definition of $h(t)$ is coordinate independent, we obtain a finite constant $K$ such that 
\[
h''(t) \leq K \|\nabla_x c(x, y) \|^2.
\]
for all $x, y \in M$. Finally because $\nabla_{x}c(x,x) = 0$ and $y \mapsto \nabla_{x}c(x,y)$ is Lipschitz on $M$ (uniformly in $x$), $\Vert \nabla_{x}c(x,y) \Vert \leq K\, d(x,y)$. Our assumption $\lambda d(x,y)^2 \leq c(x,y)$ gives \eqref{eq:r-goal} and thus
\begin{align}
\label{eq:zeta-id}  |\zeta(y) - \zeta(x) + \langle  \nabla_{x}c(x,y), \nabla \zeta(x) \rangle| \leq K \, c(x,y).
\end{align}
Integrating against an optimal plan for the transport $\mathcal{T}_{c}(\rho^\tau_{k+1},\rho^\tau_k)$ implies
\begin{align*}
 \left\vert \int_{M \times M} \zeta(y) - \zeta(x) + \langle \nabla_{x}c(x,y) , \nabla  \zeta(x) \rangle \, \dd \pi(x,y)\right\vert \leq K \mathcal{T}_c(\rho^\tau_{k+1},\rho^\tau_k).
\end{align*}
Finally by the minimality condition, Lemma \ref{lem:min-cond}, applied with the tangential vector field $\xi = \nabla \zeta$ and the marginal condition we obtain
\begin{align}
\label{eq:heat-eq-deriv1}
  \left\vert \int_M \zeta \, \dd \rho^\tau_{k} - \int_M \zeta \, \dd \rho^\tau_{k+1} + \tau \int_M \Delta \zeta - \langle\nabla \psi, \nabla \zeta\rangle\, \dd \rho^\tau_{k+1}  \right\vert \leq K \mathcal{T}_c(\rho^\tau_{k+1},\rho^\tau_k) .
\end{align}

It is standard that the rest of conclusions of Theorem \ref{thm:main}(i) follow from \eqref{eq:heat-eq-deriv1} (see, for example, \cite{Zhang07}). For completeness we include the details. 

Take $ \eta  \in C^\infty_c([0,\infty))$ so $\text{spt }\eta \subset [0,T)$ for some $T$. We simultaneously multiply \eqref{eq:heat-eq-deriv1} by $ \eta (k\tau)$ and sum from $k=0$ to $k_0$ where $k_0\tau \geq T$ and  obtain
\begin{align}
 \nonumber \Big|
\sum_{k=0}^{k_0} \,  \eta (k\tau)\left(\int_{M} \zeta \, \dd  \rho^\tau_{k} -\int_{M} \zeta \, \dd  \rho^\tau_{k+1}\right) +  &\int_{M}( \Delta \zeta -  \langle\nabla \psi,\nabla \zeta\rangle) \tau \eta (k\tau) \,  \dd \rho^\tau_{k+1}
  \Big|\\
 \label{eq:heat-eq-deriv2} &\leq  \tau C,
\end{align}
where $C = C(\zeta,c,\rho_0, \eta )$ is independent of $\tau$. Here we have used that $\sum_k \mathcal{T}_c(\rho^\tau_{k+1},\rho^\tau_k)$ is bounded by a telescoping sum; we defer the details to \eqref{eq:sum-bw}. 
We proceed to compute the terms in \eqref{eq:heat-eq-deriv2}. First,
\begin{align*}
 \sum_{k=0}^{k_0}  \int_{M} \tau  \eta (k\tau) \Delta \zeta  \,  \dd \rho^\tau_{k+1} = \sum_{k=0}^{k_0}\int_{M}\Delta \zeta \int_{k\tau}^{(k+1)\tau} \eta (k\tau) \, \dd t \,  \dd \rho^\tau_{k+1}.
\end{align*}
If $t \in [k\tau,(k+1)\tau]$ we have $| \eta (k\tau) -  \eta (t)| \leq C( \eta )\tau $ whereby
\begin{align*}
 \sum_{k=0}^{k_0}  \int_{M} \tau  \eta (k\tau) \Delta \zeta  \,  \dd \rho^\tau_{k+1} = \sum_{k=0}^{k_0}\int_{M}\Delta \zeta \int_{k\tau}^{(k+1)\tau} \eta (t) \, \dd t \,  \dd \rho^\tau_{k+1} + O(\tau).
\end{align*}
Where $O(\tau)$ denotes a quantity satisfying $|O(\tau)| \leq C( \eta ,\zeta)\tau$. 
In addition, since $\rho^\tau(t) = \rho^\tau_{k+1}$ on $(k\tau,(k+1)\tau]$ we obtain
\begin{equation}
  \label{eq:to-sub1}
   \sum_{k=0}^{k_0}  \int_{M}  \tau \eta (k\tau)\Delta \zeta   \,  \dd \rho^\tau_{k+1} = \int_0^{\infty}\!\!\!\int_{M}  \eta (t)\Delta \zeta \rho^\tau \, \dVolg \, \dd t + O(\tau). 
 \end{equation}
 Similarly,
   \begin{align*}
   \sum_{k=0}^{k_0}  \int_{M}\tau  \eta (k\tau) \langle \nabla \psi , \nabla \zeta \rangle  \,  \dd \rho^\tau_{k+1} = \int_0^{\infty}\!\!\!\int_{M}  \eta (t) \langle \nabla \psi , \nabla \zeta \rangle \rho^\tau \, \dVolg \, \dd t + O(\tau). 
 \end{align*}
 
Consider the remaining terms in \eqref{eq:heat-eq-deriv2}. We compute
\begin{align*}
  \sum_{k=0}^{k_0}  \eta (k\tau)&\left(\int_{M} \zeta \, \dd  \rho^\tau_{k} -\int_{M} \zeta \, \dd  \rho^\tau_{k+1}\right)  \\
  &= \int_{M} \zeta  \eta (0) \rho_0 \, \dVolg  +\sum_{k=1}^{k_0}\int_{M} \zeta ( \eta (k\tau) -  \eta ((k-1)\tau)) \, \dd \rho^\tau_{k}\\
  &= \int_{M} \zeta  \eta (0) \rho_0 \, \dVolg + \sum_{k=1}^{k_0} \int_{M}\zeta \int_{(k-1)\tau}^{k\tau}  \eta _t(t) \, \dd t \, \dd \rho^\tau_{k}.
\end{align*}
(Here $\eta_t(t) = \eta'(t)$.) Then using again $\rho^\tau = \rho^\tau_k$ on $((k-1)\tau,k\tau]$ we have
\begin{align}
\nonumber
  \sum_{k=0}^{k_0}  \eta (k\tau)&\left(\int_{M} \zeta \, \dd  \rho^\tau_{k} -\int_{M} \zeta \, \dd  \rho^\tau_{k+1}\right)  \\
 \label{eq:to-sub2} &= \int_{M} \zeta  \eta (0) \rho_0 \, \dVolg + \int_{0}^{\infty}\int_{M}  \eta _t(t) \zeta \rho^\tau \, \dd t \, \dVolg.
\end{align}
Substituting \eqref{eq:to-sub1} and \eqref{eq:to-sub2} into \eqref{eq:heat-eq-deriv2} we obtain
\[
  \left|
     \int_{M} \zeta  \eta (0) \rho_0 \, \dVolg + \int_{0}^\infty\int_{M} (\zeta\partial_t \eta  +  \eta (\Delta \zeta - \langle\nabla \psi,\nabla \zeta\rangle))\rho^\tau \, \dVolg \dd t
   \right| \leq C\tau. \]
 On sending $\tau \rightarrow 0$ and using the weak $L^1_{\text{loc}}$ convergence we obtain the distributional form of the Fokker--Planck equation. Lemma \ref{lem:compact-heat-eqn} implies $\rho$ is the unique classical solution to the Fokker--Planck equation with Neumann boundary condition and initial condition $\rho_0$. 

 \medskip

\textit{Step 3. Weak $L^1$ convergence for every $t$.} Following \cite{JKO98} we strengthen the convergence to weak $L^1$ convergence for each $t$, that is for all $\zeta \in L^{\infty}(M)$
\begin{equation} \label{eq:stronger} \lim_{\tau \rightarrow0 }\int_{M} \zeta \, \dd \rho^\tau(t) = \int_{M} \zeta \, \dd  \rho(t).
\end{equation}
Indeed, it suffices to prove this for $\zeta \in C_c^{\infty}(M)$. Fix $\delta > 0$ and compute
  \begin{align}
  \nonumber & \Big\vert \int_M \zeta(x) \, \dd  \rho^\tau(t) -\int_M \zeta(x) \, \dd \rho(t )\Big\vert   \\
           \label{eq:t1}               &\leq \Big\vert \int_M \zeta(x) \, \dd \rho^\tau(t) - \frac{1}{2\delta}\int_{t-\delta}^{t+\delta}\int_M \zeta(x) \, \dd \rho^\tau(r) \, \dd r  \Big\vert \\
  \nonumber                                                  &\qquad+\Big\vert \frac{1}{2\delta}\int_{t-\delta}^{t+\delta}\int_M \zeta(x) \, \dd \rho^\tau(r) \, \dd r - \frac{1}{2\delta}\int_{t-\delta}^{t+\delta}\int_M \zeta(x) \, \dd \rho(r) \, \dd r \Big\vert \\
\nonumber   &\qquad\qquad +\Big\vert \frac{1}{2\delta}\int_{t-\delta}^{t+\delta}\int_M \zeta(x) \, \dd \rho(r) \, \dd r - \int_M \zeta(x) \, \dd \rho(t) \Big\vert. 
  \end{align}
  We will send $\tau \rightarrow 0$ then $\delta \rightarrow 0$ to show that the limit is zero. Note the last term has limit $0$ as $\delta \rightarrow 0$ because the solution of the Fokker--Planck equation is smooth. Namely the time derivative of $\rho$ is bounded on $M$\footnote{Moreover, when we use these calculations in the case $M$ is a complete manifold the time derivative will be bounded on support $\zeta$. } On the other hand the second term tends to $0$ as $\tau \rightarrow 0$ by the weak convergence. 
  
 We bound the first term in \eqref{eq:t1} by $C\sqrt{\delta+\tau}$ as follows. Using minimality, a telescoping sum, and estimates for the drift and entropy we obtain
   \begin{align}
  \label{eq:sum-bw} \frac{1}{\tau}\sum_{k=0}^\infty\mathcal{T}_c (\rho^\tau_{k+1},\rho^\tau_{k}) &\leq (D(\rho^\tau_0)+E(\rho^\tau_0))-(D(\rho^\tau_{k+1})+E(\rho^\tau_{k+1})) .
 \end{align}
Thus $\sum_{k=0}^\infty\mathcal{T}_c (\rho^\tau_{k+1},\rho^\tau_{k}) \leq C\tau.$  Assume $T \gg t$ is fixed and $N,N' \in \mathbf{N}$ satisfy $N\tau,N'\tau \leq T$.  We claim
 \begin{equation}
   \label{eq:b-bound}
   \mathcal{T}_c(\rho^\tau_{N'},\rho^\tau_N) \leq C\tau|N'-N|.
 \end{equation}
 Indeed, we invoke the triangle inequality by passing through the Wasserstein distance and obtain (assuming $N'>N$)
 \begin{align*}
   \mathcal{T}_c(\rho^\tau_{N'},\rho^\tau_N) &\leq C \mathcal{W}_2^2(\rho^\tau_{N'},\rho^\tau_N)\\
   &\leq C\left(\sum_{k=N}^{N'-1}\mathcal{W}_2(\rho^\tau_{k+1},\rho^\tau_{k})\right)^2\\
                         &\leq C (N'-N)\sum_{k=N}^{N'-1}\mathcal{W}_2^2(\rho^\tau_{k+1},\rho^\tau_{k})\\
   &\leq C(N'-N)\sum_{k=N}^{N'-1}\mathcal{T}_{c}(\rho^\tau_{k+1},\rho^\tau_{k}).
 \end{align*}
 The telescoping sum \eqref{eq:sum-bw} yields \eqref{eq:b-bound}.

 In addition for arbitrary probability densities $\rho',\rho$ we claim
 \begin{equation}
   \label{eq:b-test-est}
   \left\vert \int_M \zeta \, \dd \rho'  - \int_M \zeta \, \dd \rho \right\vert^2 \leq C(|\sup D\zeta|) \mathcal{T}_{c}(\rho,\rho'). 
 \end{equation}
Indeed if  $\pi$ is an optimal transport plan for $\mathcal{T}_{c}(\rho,\rho')$, we have
 \begin{equation*}
 \begin{split}
  &\left\vert
   \int_M \zeta \, \dd \rho'  - \int_M \zeta \, \dd \rho
   \right\vert^2 = \left\vert \int_M \zeta(y) - \zeta(x) \, \dd  \pi(x,y) \right\vert^2 \\
   &\leq \sup |D\zeta|^2  \frac{1}{\lambda} \int_M c(x,y) \dd \pi(x,y), 
  \end{split}
\end{equation*}
 this last term is bounded by $\mathcal{T}_{c}( \rho,\rho')$.
 Thus for any $t,t' \leq T$ we may choose $N,N'$ such that $t \in ((N-1)\tau,N\tau]$ and similarly for $N'$. Then by \eqref{eq:b-bound}
 \[ \mathcal{T}_{c}(\rho^\tau(t),\rho^\tau(t')) =\mathcal{T}_{c}(\rho^\tau_{N'},\rho^\tau_N)  \leq C\tau|N-N'|.\]
 Which combined with \eqref{eq:b-test-est} implies
 \begin{align}
\nonumber   \left\vert \int_M \zeta \, \dd  \rho^\tau(t') - \int_M \zeta \, \dd \zeta\rho^\tau(t)\right\vert^2 &\leq C(\sup|D\zeta|)\tau|N-N'|\\
   \label{eq:b-holder} &\leq C(\sup|D\zeta|)(|t-t'|+\tau).
 \end{align}
 The latter inequality accounts for the fact that $t,t'$ may lie inside the intervals $(Nh,(N+1)\tau]$, $(N'h,(N'+1)\tau]$. Thus using \eqref{eq:b-holder} the term \eqref{eq:t1} is bounded by $C\sqrt{\tau+\delta}$. This establishes \eqref{eq:stronger} and completes the proof. 
 \end{proof}

\section{Complete manifold}
\label{sec:bw-unbounded}

In this section we remove the compactness requirements (and the Neumann boundary condition) of the previous section and prove Case (ii) of Theorem \ref{thm:main}. For this case $M = N$ and $(M,g)$ is a complete Riemannian manifold with $\lambda d^2 \leq c \leq \Lambda d^2$ and Ricci curvature bounded below. By the Bishop--Gromov inequality (which we take from Erbar's paper \cite[Proof of Lemma 4.1]{Erbar10}), there is $K$ depending on the lower bound for the Ricci curvature satisfying
\begin{equation}
  \label{eq:bg}
  \int_{M}e^{ -d(x,x_0)/2} \dVolg(x) \leq K < +\infty,
\end{equation}
where here and throughout $x_0$ denotes a fixed, but arbitrary, element of $M$. Inequality \eqref{eq:bg} is the key way in which the condition of Ricci curvature bounded below enters the proof. In this section we largely follow the original paper of Jordan, Kinderlehrer, and Otto \cite{JKO98}.

\subsection{Existence of a minimizer}
\label{sec:existence-minimizer}
This step differs most from the compact case. The entropy is not bounded below and thus, apriori, $J_k$ (which we wish to minimize) may not be bounded below. To overcome this, we establish that $\mathcal{T}_c$ dominates the negative part of the entropy. We let $M(\rho) = \int_{M} d(x,x_0)^2 \, \dd  \rho$ be the second moment of $\rho$ (about $x_0$) and establish the following estimates. 

\begin{lemma}
  Under the assumptions of Theorem \ref{thm:main} Case (ii) and with $\rho_0,\rho_1$  any nonnegative probability densities we have the following estimates.
  \begin{enumerate}
  \item[(i)] For all $\epsilon > 0$ and $M' \subset M$ there holds
    \begin{align}
      \label{eq:ent-lower} \int_{M'} (\rho \log \rho)_{-} \leq \int_{M'} e^{-d(x,x_0)/2} \, \dVolg + \epsilon M(\rho) + \frac{1}{4\epsilon} \int_{M'} \rho \, \dVolg.
    \end{align}
  \item[(ii)] The optimal transport cost satisfies 
    \begin{equation}
      \label{eq:moment-comparison}
      \mathcal{T}_c(\rho_0,\rho_1) \geq \frac{\lambda}{2} M(\rho_0) - \lambda M(\rho_1),
    \end{equation}
    where $\lambda> 0$ is from the statement of Theorem \ref{thm:main}.
  \item[(iii)] There holds
    \begin{equation}
      \label{eq:jk-bound-mom}
      J_k(\rho) \geq \frac{\lambda}{4\tau}M(\rho) - \frac{\lambda}{\tau} M(\rho^\tau_k)-C(K,\lambda,\tau).
    \end{equation}
  \end{enumerate}
\end{lemma}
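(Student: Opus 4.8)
The plan is to prove the three estimates in order, deriving (iii) by assembling (i) and (ii) together with the nonnegativity of the drift. The one genuinely new idea is in (i): I would compare $\rho$ against the reference density $w(x) := e^{-d(x,x_0)/2}$, which is exactly the integrand controlled by the Bishop--Gromov bound \eqref{eq:bg}. Splitting $-\rho\log\rho = -\rho\log(\rho/w) - \rho\log w$, the elementary inequality $a\log(a/b) \geq a-b$ (valid for $a,b>0$) gives $-\rho\log(\rho/w) \leq w - \rho \leq w$, while $-\rho\log w = \tfrac12 d(x,x_0)\,\rho$. Hence pointwise $-\rho\log\rho \leq e^{-d(x,x_0)/2} + \tfrac12 d(x,x_0)\,\rho$, and since the right-hand side is nonnegative the same bound holds for $(\rho\log\rho)_-$. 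Integrating over $M'$ and applying Young's inequality $\tfrac12 d(x,x_0) \leq \epsilon\, d^2(x,x_0) + \tfrac{1}{4\epsilon}$ to the linear term, then enlarging $\int_{M'} d^2\rho$ to $M(\rho)=\int_M d^2\rho$, yields \eqref{eq:ent-lower}.

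For (ii) I would start from the hypothesis $c(x,y) \geq \lambda d^2(x,y)$ and the quadratic triangle inequality $d^2(x,y) \geq \tfrac12 d^2(x,x_0) - d^2(y,x_0)$, which follows from $d(x,x_0) \leq d(x,y)+d(y,x_0)$ together with $(a+b)^2 \leq 2a^2+2b^2$. Integrating against an arbitrary plan $\pi \in \Pi(\rho_0,\rho_1)$ and using the marginal conditions gives $\int c\,\dd\pi \geq \lambda\big(\tfrac12 M(\rho_0) - M(\rho_1)\big)$; since this holds for every $\pi$, taking the infimum produces \eqref{eq:moment-comparison}.

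For (iii) I would combine the pieces. As $\psi \geq 0$ we have $D(\rho) \geq 0$, and since $E(\rho) \geq -\int_M (\rho\log\rho)_-$, applying (i) with $M'=M$, the bound \eqref{eq:bg}, and $\int_M \rho\,\dVolg = 1$ gives $E(\rho) \geq -K - \epsilon M(\rho) - \tfrac{1}{4\epsilon}$. Meanwhile (ii) gives $\tfrac{1}{\tau}\mathcal{T}_c(\rho,\rho^\tau_k) \geq \tfrac{\lambda}{2\tau}M(\rho) - \tfrac{\lambda}{\tau}M(\rho^\tau_k)$. Adding these and choosing $\epsilon = \tfrac{\lambda}{4\tau}$, so that exactly half of the positive moment term survives, yields \eqref{eq:jk-bound-mom} with $C(K,\lambda,\tau) = K + \tfrac{\tau}{\lambda}$.

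The main obstacle is part (i): everything hinges on selecting the correct reference density, namely the one matching \eqref{eq:bg}, so that the logarithmic part is dominated by a fixed integrable function while the leftover term, linear in $d(x,x_0)$, can be traded against the second moment with a free constant $\epsilon$. Once (i) is arranged this way, (ii) is a routine transport estimate and (iii) is bookkeeping, with the flexibility in $\epsilon$ precisely calibrated to recover the coefficient $\tfrac{\lambda}{4\tau}$ demanded in the statement.
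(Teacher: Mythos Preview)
Your proposal is correct; parts (ii) and (iii) match the paper's argument essentially line for line, including the choice $\epsilon = \lambda/(4\tau)$. The only genuine difference is in part (i). The paper splits $M' \cap \{\rho \leq 1\}$ into $\{\rho \leq e^{-d(x,x_0)}\}$ and its complement: on the first set it uses the elementary bound $|z\log z| \leq \sqrt{z}$ for $z \in (0,1]$ to get $(\rho\log\rho)_- \leq e^{-d(x,x_0)/2}$, and on the second it uses $|\log\rho| \leq d(x,x_0)$ directly, arriving at the pointwise estimate $(\rho\log\rho)_- \leq e^{-d(x,x_0)/2} + d(x,x_0)\rho$ before applying Young. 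Your route---the Gibbs inequality $a\log(a/b) \geq a-b$ against the reference weight $w = e^{-d(x,x_0)/2}$---reaches the same pointwise inequality (with $\tfrac12 d\rho$ in place of $d\rho$, an immaterial improvement) without any case distinction. Your argument is a bit cleaner and makes transparent why the reference density should match the Bishop--Gromov integrand; the paper's version is slightly more elementary in that it avoids the relative-entropy inequality.
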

\begin{proof}
  To prove \eqref{eq:ent-lower} note if $z \in (0,1]$ then $(z \log z)_{-} = z |\log z| \leq z^{1/2}$. Let
  \begin{align*}
    M_0 &= \{x \in M'; \rho (x) \leq e^{-d(x,x_0)}\text{ and }\rho(x) \leq 1\},\\
  \text{ and } M_1 &= \{x \in M' ; e^{-d(x,x_0)} \leq \rho(x)\text{ and }\rho(x) \leq 1\}.
  \end{align*}
  Then
  \begin{align*}
    \int_{M'} (\rho \log \rho)_{-}\dVolg  &= \int_{M_0}(\rho \log \rho)_{-}\dVolg + \int_{M_1} (\rho \log \rho)_{-}\dVolg\\
                                 & \leq \int_{M_0} \rho^{1/2} \dVolg + \int_{M_1} \rho d(x,x_0) \dVolg\\
    &\leq  \int_{M'} e^{-d(x,x_0)/2} \dVolg + \int_{M'} \epsilon d(x,x_0)^2 \rho +\frac{\rho}{4\epsilon} \dVolg.
  \end{align*}
  We've used $ab \leq \epsilon a^2 + b^2/4\epsilon$ with $a = d(x,x_0)$, $b=1$ and this proves \eqref{eq:ent-lower}.

  To prove \eqref{eq:moment-comparison} take $\rho_0,\rho_1 \in \mathcal{P}(M)$ and let $\pi$ be an optimal plan for $\mathcal{T}_c(\rho_0,\rho_1)$.
  Then by the condition $\lambda d^2(x,y) \leq c(x,y)$
  \begin{equation}
    \label{eq:to-simp}
    \mathcal{T}_c(\rho_0,\rho_1)  = \int_{M}c(x,y) \dd \pi(x,y) \geq \lambda \int_{M}d^2(x,y) \dd \pi(x,y).
  \end{equation}
  The triangle inequality (in the form $d(x,x_0) \leq d(x,y)+d(y,x_0))$ implies
  \[ d(x,y)^2 \geq \frac{1}{2}d(x,x_0)^2 - d(x_0,y)^2,\]
  whereby the marginal condition for $\pi$ implies \eqref{eq:to-simp} becomes
  \[\mathcal{T}_c(\rho_0,\rho_1) \geq \frac{\lambda}{2}M(\rho_0) - \lambda M(\rho_1). \]

  For \eqref{eq:jk-bound-mom} we use $\psi \geq 0$, \eqref{eq:ent-lower} with $\epsilon = \lambda/(4\tau)$, and \eqref{eq:moment-comparison} to obtain
  \begin{align*}
    J_k(\rho) &= \frac{1}{\tau}\mathcal{T}_c(\rho,\rho^\tau_{k}) + \int_{M} \psi \, \dd \rho + \int_{M} \log \rho \, \dd \rho\\
           &\geq \frac{1}{\tau}\mathcal{T}_c(\rho,\rho^\tau_{k}) - \int_{M} (\rho\log \rho)_{-} \dVolg\\
           &\geq \frac{\lambda}{2\tau}M(\rho) - \frac{\lambda}{\tau} M(\rho^\tau_k) - \int_{M} e^{-d(x,x_0)/2} \, \dVolg \\
           &\quad\quad - \frac{\lambda}{4\tau} M(\rho) - \frac{\tau}{\lambda} \int_{M} \rho \, \dVolg\\
    &\geq \frac{\lambda}{4\tau}M(\rho)- \frac{\lambda}{\tau} M(\rho^\tau_k) - C(K,\lambda,\tau).
  \end{align*}
  This is \eqref{eq:jk-bound-mom} and completes the proof. 
\end{proof}

\begin{lemma}\label{lem:bw-ub-exist}
  Under the assumptions of Theorem \ref{thm:main} (ii) and for each $k=1,2,\dots$ the minimization problem \eqref{eq:m-jko-def} has a unique minimizer $\rho^\tau_{k+1}$. \end{lemma}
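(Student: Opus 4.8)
The plan is to mirror the compact existence proof (Lemma \ref{lem:well-posed}) via the direct method, the crucial new input being the coercivity estimates just established, which compensate for the lost lower bound on the entropy. I argue by induction on $k$, maintaining the hypothesis that $\rho^\tau_k$ has finite entropy, drift, and second moment; the base case is the standing assumption on $\rho_0$.

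First I would observe that $J_k$ is bounded below. Since $M(\rho^\tau_k) < \infty$ by the induction hypothesis, estimate \eqref{eq:jk-bound-mom} gives $J_k(\rho) \geq -\frac{\lambda}{\tau}M(\rho^\tau_k) - C(K,\lambda,\tau) > -\infty$ for every $\rho$. Hence $\inf_{\mathcal{P}(M)} J_k$ is finite and I may select a minimizing sequence $\{\rho_{k,i}\}_i$ with $\sup_i J_k(\rho_{k,i}) =: C' < \infty$.

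Next I extract a convergent subsequence using Lemma \ref{lem:mod-dp}, for which two uniform bounds are needed. The second moment bound is immediate from rearranging \eqref{eq:jk-bound-mom}: $\frac{\lambda}{4\tau}M(\rho_{k,i}) \leq C' + \frac{\lambda}{\tau}M(\rho^\tau_k) + C$, so $\sup_i M(\rho_{k,i}) < \infty$. For the uniform-integrability bound I take the superlinear $h(z) = (z\log z)_+$ and write $\int_M (\rho_{k,i}\log\rho_{k,i})_+ = E(\rho_{k,i}) + \int_M (\rho_{k,i}\log\rho_{k,i})_-$; the first summand is at most $J_k(\rho_{k,i}) \leq C'$ because the drift and transport terms are nonnegative, while the second is controlled by \eqref{eq:ent-lower} (with $M'=M$ and $\int_M\rho_{k,i} = 1$) in terms of the constant $K$ and the already-bounded second moment. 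Since $M$ is a complete Riemannian manifold it is proper by Hopf--Rinow, so Lemma \ref{lem:mod-dp} applies and produces a subsequence converging weakly in $L^1$ and narrowly to a probability density $\rho^\tau_{k+1}$.

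Finally, by Lemma \ref{lem:jk-lsc} the functional $J_k$ is lower semicontinuous with respect to narrow convergence, so $J_k(\rho^\tau_{k+1}) \leq \liminf_i J_k(\rho_{k,i}) = \inf J_k$, identifying $\rho^\tau_{k+1}$ as a minimizer; strict convexity of $J_k$ forces uniqueness. To close the induction I verify the minimizer inherits the three finiteness properties: $M(\rho^\tau_{k+1}) < \infty$ by lower semicontinuity of the second moment under narrow convergence (or again by \eqref{eq:jk-bound-mom}); then $E(\rho^\tau_{k+1})$ is bounded below through \eqref{eq:ent-lower} and above by $J_k(\rho^\tau_{k+1}) < \infty$, and rearranging $J_k = E + D + \frac{1}{\tau}\mathcal{T}_c$ with $\mathcal{T}_c \geq 0$ shows the drift $D(\rho^\tau_{k+1})$ is finite as well. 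The main obstacle, and the reason this case is genuinely harder than the compact one, is precisely the absence of an a priori lower bound on $J_k$; this is resolved by the Bishop--Gromov-driven estimate \eqref{eq:ent-lower}, which dominates the negative entropy by a constant plus a small multiple of the second moment, together with the moment comparison \eqref{eq:moment-comparison} that renders $\frac{1}{\tau}\mathcal{T}_c$ coercive in $M(\rho)$.
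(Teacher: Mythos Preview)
Your proposal is correct and follows essentially the same route as the paper: bound $J_k$ from below via \eqref{eq:jk-bound-mom}, take a minimizing sequence, use \eqref{eq:jk-bound-mom} for the second-moment bound and \eqref{eq:ent-lower} to control $\int(\rho\log\rho)_+$, apply Lemma~\ref{lem:mod-dp}, and conclude via Lemma~\ref{lem:jk-lsc}. You add a few details the paper leaves implicit---the explicit induction hypothesis, the appeal to Hopf--Rinow for properness, and the verification that the minimizer retains finite moment, entropy, and drift---all of which are correct and helpful.
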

\begin{proof}
 It is immediate from \eqref{eq:jk-bound-mom} that $J_k$ is bounded below (in this proof $\tau,k$ are fixed).   Because $J_k$ is bounded below we may take a minimizing sequence of probability densities $\rho^\tau_{k,i}$ satisfying
  \begin{equation}
    \label{eq:min-seqn}
    \lim_{i\rightarrow \infty}J_k(\rho^\tau_{k,i})  = \text{inf}_{\rho \in \mathcal{P}(M)}J_k(\rho).
  \end{equation}
  Without loss of generality $\sup_{i}J_k(\rho^\tau_{k,i}) < \infty$. To obtain a convergent subsequence by Lemma \ref{lem:mod-dp} it suffices to establish
  \begin{equation}
    \label{eq:conv-bound}
    \sup_{i}\int(\rho^\tau_{k,i}\log \rho^\tau_{k,i})_+ \dVolg < +\infty
  \end{equation}
  and a second moment bound.
  To this end, note from \eqref{eq:jk-bound-mom} we have
  \begin{equation}
    \label{eq:seqn-mom-bound}
    \sup_{i}M(\rho^\tau_{k,i}) < +\infty.
  \end{equation}
  Combined with
  \[J_k(\rho) \geq \frac{1}{\tau}\mathcal{T}_c(\rho,\rho^\tau_k)+\int(\rho\log \rho)_{+}\dVolg- \int(\rho\log \rho)_{-} \dVolg, \]
  into which we substitute \eqref{eq:moment-comparison} and \eqref{eq:ent-lower}, we obtain \eqref{eq:conv-bound}. We conclude the existence of a subsequence $\rho^\tau_{k,i_j}$ converging weakly to some $\rho^\tau_{k+1}$.

  Finally, it is again immediate by the lower semicontinuity and strict convexity of $J_k$ (Lemma \ref{lem:jk-lsc}) that $\rho^\tau_{k+1}$ uniquely minimizes $J_k$.
\end{proof}

The minimality equation for $\rho^{\tau}_{k+1}$ is unchanged once we take compactly supported $\zeta$ and, once more, follows immediately from Lemma \ref{lem:diff-identities}.  

\begin{lemma}\label{lem:ub-el}
  Let $\rho^\tau_{k+1}$ be the minimizer of the functional $J_k$. Then for every compactly supported Lipschitz vector field $\xi$ there holds
  \begin{equation}
    \label{eq:ub-el}
    \int_{M} \div \xi - \langle \nabla \psi , \xi \rangle\, \dd \rho^\tau_{k+1}  =  \frac{1}{\tau}\int_{M \times M} \langle\xi(x), \nabla_{x}c(x,y) \rangle \dd\pi(x,y),
  \end{equation}
where $\pi$ is an optimal plan for $\mathcal{T}_{c}(\rho^\tau_{k+1},\rho^\tau_{k})$. 
\end{lemma}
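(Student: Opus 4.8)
The plan is to follow the proof of Lemma \ref{lem:min-cond} almost verbatim, replacing case (i) of Lemma \ref{lem:diff-identities} with case (ii); the only genuine new work is verifying, in the noncompact setting, that the hypotheses of the differentiation formulas are met. Abbreviate $\rho := \rho^\tau_{k+1}$ and $\mu := \rho^\tau_k$. Arguing by induction, suppose $\mu$ has finite entropy, finite drift, and finite second moment (the base case $\mu = \rho_0$ holds by assumption). First I would record that $\rho$ inherits these properties together with $\mathcal{T}_c(\rho,\mu) < \infty$. Indeed, using $\mu$ as a competitor gives $J_k(\rho) \leq J_k(\mu) = E(\mu) + D(\mu) < \infty$. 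Since \eqref{eq:moment-comparison} bounds $\mathcal{T}_c(\rho,\mu)$ below and $D \geq 0$, finiteness of $J_k(\rho)$ forces both $E(\rho) < \infty$ and, because $\psi \geq 0$, $D(\rho) < \infty$, so $\mathcal{T}_c(\rho,\mu)<\infty$ as well. Finiteness of the second moment $M(\rho)$ comes directly from \eqref{eq:jk-bound-mom}, and the negative part of the entropy is then controlled by combining \eqref{eq:ent-lower} with the Bishop--Gromov integral \eqref{eq:bg} and the bound on $M(\rho)$.

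With the integrability secured, let $\Phi_t$ be the flow of the compactly supported Lipschitz field $\xi$ and set $\rho_t := (\Phi_t)_\#\rho$, so $\rho_0 = \rho$ and $\rho_t \in \mathcal{P}(M)$ for small $t$. Minimality of $\rho$ gives $J_k(\rho_t) \geq J_k(\rho)$ for all $t$, hence
\[ 0 \leq \limsup_{t \to 0^+} \frac{1}{t}\bigl(J_k(\rho_t) - J_k(\rho)\bigr). \]
Splitting $J_k = E + D + \tau^{-1}\mathcal{T}_c(\cdot,\mu)$, the entropy and drift terms have genuine derivatives given by \eqref{eq:diff2} and \eqref{eq:diff1}, while the transport term obeys the one-sided bound \eqref{eq:diff3}; these are exactly case (ii) of Lemma \ref{lem:diff-identities}, applicable because $\xi$ is compactly supported and the preceding paragraph supplies the required finiteness of $E(\rho)$, $M(\rho)$, and $\mathcal{T}_c(\rho,\mu)$. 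Superadditivity of $\limsup$ then yields
\[ 0 \leq -\int_M \div \xi \, \dd\rho + \int_M \langle \nabla\psi, \xi\rangle \, \dd\rho + \frac{1}{\tau}\int_{M\times M} \langle \xi(x), \nabla_x c(x,y)\rangle \, \dd\pi(x,y), \]
where $\pi$ is optimal for $\mathcal{T}_c(\rho,\mu)$.

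The right-hand side is linear in $\xi$, and $-\xi$ is again compactly supported and Lipschitz with the \emph{same} optimal plan $\pi$ entering \eqref{eq:diff3}; applying the displayed inequality to $-\xi$ therefore bounds the same linear functional above by $0$. Combining the two bounds shows it vanishes, which is precisely \eqref{eq:ub-el}. The main obstacle, and the only point where the argument truly departs from the compact case, is the verification in the first paragraph that $\rho^\tau_{k+1}$ has finite entropy and second moment: this is where the lower Ricci bound enters through \eqref{eq:bg}, controlling the negative part of the entropy and thereby licensing the differentiation formula \eqref{eq:diff2} on the complete manifold.
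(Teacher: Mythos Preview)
Your proposal is correct and follows exactly the route the paper takes: the paper simply states that the minimality equation is unchanged once one uses compactly supported vector fields and invokes Lemma \ref{lem:diff-identities} case (ii), repeating the argument of Lemma \ref{lem:min-cond}. Your additional paragraph verifying finiteness of entropy, drift, second moment and $\mathcal{T}_c(\rho^\tau_{k+1},\rho^\tau_k)$ is not in the paper's (one-line) proof because these facts were already established in the existence Lemma \ref{lem:bw-ub-exist} and the estimates \eqref{eq:jk-bound-mom}, \eqref{eq:conv-bound}, \eqref{eq:seqn-mom-bound}; but spelling them out does no harm.
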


\subsection{Derivation of the Fokker--Planck equation}
\label{sec:deriv-heat-equat-1}

\begin{proof}[Proof. (Theorem \ref{thm:main}, Case (ii))]
\textit{Step 1. (Derivation of time discretized equation)}
We explain the modifications needed to obtain \eqref{eq:heat-eq-deriv1} and the weak $L^1$ convergence on $[0,\infty) \times M$. With these in hand the derivation of the Fokker--Planck equation and improvement to weak $L^1$ convergence on $M$ for each $t$ follows verbatim Case (i). Thus, let $\zeta\in C^\infty_c(M)$ be a test function. 

  The compact support of $\zeta$  implies that by arguing as in \eqref{eq:zeta-id} for all $x,y \in \text{spt }\zeta $ there holds 
  \begin{align}
  |\zeta(y) - \zeta(x) - \langle\nabla \zeta(x), \nabla_xc(x,y)\rangle|   \leq C(\zeta)d(x,y)^2. \label{eq:ub-interpolant}
  \end{align}
  This inequality clearly holds when neither $x$ nor $y$ is in $\text{spt }\zeta$. When $x$ is in the support of $\zeta$ but $y$ is not, we argue as follows. The function $x \mapsto \zeta(x) - \langle\nabla \zeta(x), \nabla_xc(x,y)\rangle=:h(x)$ is smooth, equal to $0$ outside the support of $\zeta$ and has a derivative bound depending on $c$ in the support of $\zeta$ and also on $\zeta$. Let $\gamma_t$ be the (Riemannian) geodesic with $\gamma_0 = y$ and $\gamma_1 = x$ and let $T$ be the supremum of $t$ such that $\gamma_t$ is not in $\text{spt}\zeta$. Then, noting $h(\gamma_T) = 0$, a standard estimate using a Taylor series yields
  \begin{align*}
    |\zeta(x) - &\langle\nabla \zeta(x), \nabla_xc(x,y)\rangle| \\
    &= |\zeta(x) - \langle\nabla \zeta(x), \nabla_xc(x,y)\rangle -  \zeta(\gamma_T) - \langle\nabla \zeta(\gamma_T), \nabla_xc(\gamma_T,y)\rangle|\\
    &\leq C(\zeta,\phi)d(x,\gamma_T)^2 \leq C d(x,y)^2. 
  \end{align*}
  Here we are using that $\frac{\dd^2}{\dd t^2}h(\gamma_t) = \langle \text{Hess }f \dot{\gamma}_t,\dot{\gamma}_t \rangle$ and that $\gamma_t$ is a constant speed geodesic.  A similar, but simpler, estimate yields \eqref{eq:ub-interpolant} for $y \in \text{spt }\zeta$ and $x \not\in \text{spt }\zeta$. Thus \eqref{eq:ub-interpolant} holds for all $x,y \in M$. 

We let $\pi$ denote an optimal plan for $\mathcal{T}_c(\rho^\tau_{k+1},\rho^\tau_k)$ and integrate \eqref{eq:ub-interpolant} with respect to $\pi$. Using $\lambda d(x,y)^2 \leq c(x,y) $, we have
   \[
   \left|
     \int_{M\times M} \zeta(y) - \zeta(x) -\langle\nabla \zeta(x), \nabla_xc(x,y)\rangle \, \dd  \pi(x,y)
   \right| \leq C \, \mathcal{T}_c(\rho^\tau_{k+1},\rho^\tau_k).\]
Combined with the minimality equation (Lemma \ref{lem:ub-el} with  $\xi = \nabla \zeta$) and the marginal condition we obtain
\begin{equation}
  \label{eq:bw-ub-fd}
  \begin{split}
  &\left|
    \int_{M} \zeta \, \dd  \rho^\tau_{k} - \int_{M} \zeta  \, \dd  \rho^\tau_{k+1} +\tau \int_{M} \Delta \zeta - \langle \nabla \psi , \nabla \zeta \rangle\, \dd \rho^\tau_{k+1}
  \right|\\
  &\leq C \, \mathcal{T}_c(\rho^\tau_{k+1},\rho^\tau_k).
  \end{split}
\end{equation}

The remainder of the proof concerns the convergence as $\tau \rightarrow 0.$

\textit{Step 2: Convergence estimates.}
Now we establish the necessary estimates to apply Lemma \ref{lem:mod-dp} and conclude the existence of a weakly/narrowly convergent subsequence of $\rho^\tau$ on $[0,\infty)\times M$. Fix $T>0$, we show there is $C$ depending on $T$ such that whenever $k_0\tau \leq T$ there holds
 \begin{align}
   \label{eq:ub-mom-bound} M(\rho^\tau_k) &\leq C,\\
   \label{eq:ub-pos-bound} \int (\rho^\tau_k\log \rho^\tau_k)_{+} \dVolg &\leq C.
 \end{align}

 We begin by showing $J_k(\rho_{k+1})$ is nonincreasing in $k$, and computing a telescoping sum we'll use repeatedly. First, the minimality of $\rho^\tau_{k+1}$ implies
 \begin{align}
   \label{eq:7}
   \frac{1}{\tau}&\mathcal{T}_c(\rho^\tau_{k+1},\rho^\tau_{k}) + E(\rho^\tau_{k+1}) + D(\rho^\tau_{k+1}) = J_k(\rho^\tau_{k+1}) \leq J_k(\rho^\tau_{k}) \\
   &= E(\rho^\tau_{k}) + D(\rho^\tau_{k}) \leq J_{k-1}(\rho^\tau_{k}). 
 \end{align}
 From this we extract
 \[\frac{1}{\tau}\mathcal{T}_c(\rho^\tau_{k+1},\rho^\tau_{k}) \leq E(\rho^\tau_{k}) + D(\rho^\tau_{k}) - E(\rho^\tau_{k+1}) - D(\rho^\tau_{k+1}),  \]
 which we sum to obtain
 \begin{equation}
   \label{eq:ub-tel}
   \frac{1}{\tau}\sum_{j=0}^{k-1}\mathcal{T}_c(\rho^\tau_{j+1},\rho^\tau_{j}) = E(\rho^\tau_{0}) + D(\rho^\tau_{0}) - E(\rho^\tau_{k}) - D(\rho^\tau_{k}). 
 \end{equation}

 To obtain \eqref{eq:ub-mom-bound} we pass through the Wasserstein distance so that, at the expense of a constant, we may use the triangle inequality. By \eqref{eq:moment-comparison}, we have
 \begin{align*}
   M(\rho^\tau_k) &\leq \frac{2}{\lambda}\mathcal{T}_c(\rho^\tau_k,\rho^\tau_0) + 2M(\rho^\tau_0)\\
            &\leq \frac{2\Lambda}{\lambda}\mathcal{W}_2^2(\rho^\tau_k,\rho^\tau_0) + 2M(\rho^\tau_0)\\
            &\leq k\frac{2\Lambda}{\lambda}\sum_{j=0}^{k-1}\mathcal{W}_2^2(\rho^\tau_{j+1},\rho^\tau_j)+2M(\rho^\tau_0)\\
   & \leq k\frac{2\Lambda}{\lambda^2}\sum_{j=0}^{k-1}\mathcal{T}_c(\rho^\tau_{j+1},\rho^\tau_j)+2M(\rho^\tau_0).
 \end{align*}
 Employing \eqref{eq:ub-tel} we have
 \[ M(\rho^\tau_k) \leq (\tau k) \frac{2\Lambda}{\lambda^2}\left(E(\rho^\tau_{0}) + D(\rho^\tau_{0}) - E(\rho^\tau_{k})\right)+2M(\rho^\tau_0). \]
 We've used $-D(\rho^\tau_{k+1}) \leq 0$. Next, $\tau k \leq T$ and \eqref{eq:ent-lower} with $\epsilon = \lambda^2/(4\Lambda T)$ gives
 \[ M(\rho^\tau_k) \leq C(\rho_0,K,\lambda,\Lambda,T) + \frac{1}{2} M(\rho^\tau_k),  \]
 which is \eqref{eq:ub-mom-bound}.  Finally, for \eqref{eq:ub-pos-bound} note
 \[ \int (\rho^\tau_k\log \rho^\tau_k)_{+} \dVolg  \leq J_{k-1}(\rho^\tau_k) + \int (\rho^\tau_k\log \rho^\tau_k)_{-} \dVolg.\] Estimates for $J_{k-1}(\rho^\tau_k)$ are straight forward since this quantity is nonincreasing. In addition \eqref{eq:ent-lower} combined with \eqref{eq:ub-mom-bound} yields \eqref{eq:ub-pos-bound}. We note also that the entropy and drift permit straightforward estimates (as required by Lemma \ref{lem:glob-heat-eqn}) once \eqref{eq:ub-mom-bound} and \eqref{eq:ub-pos-bound} are obtained (see \cite{JKO98} for details). 

 We integrate \eqref{eq:ub-mom-bound} and \eqref{eq:ub-pos-bound} to obtain there is a curve $\rho(t)$ such that as $\tau \rightarrow 0$, at least up to a subsequence  $\rho^\tau(t) \rightarrow \rho(t)$ weakly on $[0,T ]\times M$ and by diagonalization we obtain a weak/narrow limit on $[0,\infty)\times M$.

 Having obtained weak convergence and equation \eqref{eq:bw-ub-fd} the conclusions of Theorem \ref{thm:main} Case (ii) follow essentially verbatim the proof of Theorem Case (i) (from the point equation \eqref{eq:heat-eq-deriv1}). Note Lemma \ref{lem:glob-heat-eqn}, which is where the condition $\Vert \nabla \psi \Vert \leq C(1+\psi)$ is employed, yields the corresponding distributional form of the heat equation. 
\end{proof}

\section{The Bregman case} \label{sec:Bregman}
In this section we specialize to the Bregman case (Theorem \ref{thm:main2}). Let $\varphi$ be a smooth function defined on an open convex subset $N$ of $\mathbf{R}^n$, such that $D^2 \varphi(x)$ is positive definite for $x \in N$. In particular, $\varphi$ is strictly convex. As in \eqref{eq:breg-def}, let the cost function $c(x, y)$ be the Bregman divergence:
\[
c(x, y) = B_{\varphi}(x, y) = \varphi(x) - \varphi(y) - D\varphi(y) \cdot (x - y).
\]
A direct computation (under the Euclidean coordinate system) gives
\begin{equation} \label{eqn:Bregman.cost.derivative}
c_{x^i}(x, y) = D_i\varphi(x)  - D_i\varphi(y), \quad c_{y^i}(x, y) = \varphi_{y^i y^k}(y) (x^k - y^k),
\end{equation}
and
\begin{equation} \label{eqn:Bregman.cost.derivative2}
c_{x^i, y^j}(x, y) = - D_{ij} \varphi(y).
\end{equation}
Setting $x = y$ in the last equation shows that $c$ induces, in the sense of \eqref{eq:cg-def}, the Hessian metric corresponding to the convex potential $\varphi$.\footnote{More generally, a Hessian manifold is a Riemannian manifold whose metric can be locally expressed in the form \eqref{eqn:Bregman.cost.derivative2}. See \cite{shima2007geometry} for an in-depth study. Here, we only consider the case where \eqref{eqn:Bregman.cost.derivative2} holds globally, as otherwise the Bregman divergence is not globally defined.} On the other hand, writing 
\begin{equation} \label{eqn:Bregman.mixed}
\begin{split}
c(x, y) &= \frac{1}{2}|x - D\varphi(y)|^2 + \left( \varphi(x) - \frac{1}{2}|x|^2\right) \\
&\quad - \left(\varphi(y) + \frac{1}{2}|D\varphi(y)|^2 - y \cdot D\varphi(y)  \right)
\end{split}
\end{equation}
shows that the Bregman cost is, up to linear terms, Euclidean in the mixed coordinate system $(x, D\varphi(y))$ on $N \times N$. See \cite{RW23}, and the references therein, for the properties and applications of the Bregman--Wasserstein divergence $\mathcal{B}_{\varphi}$ defined by \eqref{eq:bw-div}. 

Since $D^2 \varphi$ is positive definite, $D\varphi$ is a diffeomorphism.\footnote{See \cite{R70} for standard results in convex analysis.} It follows from \eqref{eqn:Bregman.cost.derivative} and \eqref{eqn:Bregman.cost.derivative2} that $c$ satisfies conditions A1 and A2. From \eqref{eqn:Bregman.cost.derivative}, we see that $y_t$ is a $c$-segment from $y_0$ to $y_1$ if and only if
\[
D \varphi(y_t) = (1 - t) D \varphi(y_0) + t D \varphi(y_1).
\]
Thus $N$ is $c$-convex with respect to itself if and only if $D \varphi(N)$ is convex. Note that $D \varphi(N)$ is convex if $(N, \varphi)$ is a convex function of Legendre type. Unfortunately, $(N, \varphi)$ being of Legendre type is not sufficient to yield the other conditions required in Theorem \ref{thm:main2} which employs Theorem \ref{thm:main}.

\begin{proof}[Proof of Theorem \ref{thm:main2}]
From the above discussion we see that under the assumptions of the theorem, the Bregman cost satisfies A1 and A2, and $N$ is $c$-convex with respect to itself. 

Consider Case (i). Since $M$ is precompact, there exists constants $c_0, c_1 > 0$ such that $c_0 I \leq D^2 \varphi \leq c_1 I$. From this it is easy to see that
\[
c(x, y) = B_{\varphi}(x, y) \asymp |x - y|^2 \asymp d^2(x, y), \quad x, y \in M.
\]
Thus Theorem \ref{thm:main}(i) applies. Clearly Case (ii) is a special case of Theorem \ref{thm:main}(ii).
\end{proof}

\begin{example}
Let $M = \mathbf{R}^n$ and let $\varphi$ be a smooth convex function on $\mathbf{R}^n$ such that $c_0I \leq D^2 \varphi \leq c_1I$ for some $c_0, c_1 > 0$. Let $g$ be the Hessian metric $g = D^2 \varphi$. From standard comparison results, we see that $(M, g)$ satisfies the assumptions of Theorem \ref{thm:main2}(ii). Apart from the Bregman divergence, another explicit cost function is the (modified) Mahalanobis distance defined by
\[
c(x, y) = \frac{1}{2}(x - y)^{\top} D^2 \varphi(y) (x - y).
\]
Nevertheless, we note that Brenier's theorem does not apply to this cost. Also see \cite{HCTM20} for the special case where $D^2 \varphi$ is diagonal.
\end{example}

\begin{example}[Fisher-Rao geometry of multivariate Gaussian distributions]
For $n \geq 1$, let $\mathcal{S}_+^n$ be the space of $n \times n$ positive definite matrices. Let $N = \mathbf{R}^n \times \mathcal{S}_+^n$ which we identify with the space of multivariate Gaussian distributions on $\mathbf{R}^n$ (i.e., $(m, \Sigma) \leftrightarrow \mathcal{N}(m, \Sigma)$).\footnote{The set $N$ is relatively open so can be identified with an open convex set of some Euclidean space.} Consider the metric $g$ whose line element is given by
\[
\dd s^2 = (\dd m)^{\top} \Sigma^{-1} (\dd m) + \frac{1}{2} \mathrm{tr} \left( (\Sigma^{-1} \dd \Sigma )^2\right).
\]
It is the Fisher--Rao information metric on the space of Gaussian distributions \cite{calvo1991explicit}. The authors of \cite{calvo1991explicit} derived explicit solutions to the Riemannian geodesic equation, but analytic formulas for the Riemannian distance (which requires integrating along the geodesic) are only available in special cases. 

Consider the alternative parametrization
\[
\theta = (\theta_1, \theta_2) = \left( \Sigma^{-1} \mu, \frac{-1}{2} \Sigma^{-1} \right),
\]
which takes values in $\mathbf{R}^n \times (-\mathcal{S}_+^n)$.\footnote{This is the canonical parameter under which the normal distribution is an exponential family \cite{A16}.} Consider
\[
\varphi(\theta) = -\frac{1}{4} \theta_1^{\top} \theta_2^{-1} \theta_1 - \frac{1}{4} \log \det (-2 \theta_2),
\]
which corresponds to the log-partition function of $N(m, \Sigma)$. Then $\varphi$ is smooth and convex, and its Bregman divergence (which can be expressed in closed form) induces $g$. Thus Theorem \ref{thm:main2}(i) can be applied on compact subsets of $N$. Analogous statements can be made on any regular exponential family \cite{B14} on which a convex potential induces the Fisher-Rao metric.
\end{example}

\section{Conclusion and future directions} \label{sec:conclusion}
In this paper we propose and prove the convergence of a modified JKO scheme for the Fokker-Planck equation on a Riemannian manifold, where the (squared) Wasserstein distance is replaced by an optimal transport cost whose cost function is compatible with the underlying metric. This is not only of theoretical interest but also allows us to use cost functions, and hence transport costs, which are computationally more tractable than the Riemannian distance and Wasserstein distance. As a specific example, we consider the Bregman divergence which is a tractable cost on a Hessian Riemannian manifold induced by a convex potential. The following directions arise naturally:  
\begin{itemize}
\item[(i)] Computational algorithms for implementing the modified JKO scheme. In the Euclidean case, one way to implement the JKO step \eqref{eq:jko-def} is to use  input convex neural networks \cite{amos2017input} (which approximate the Brenier potential), possibly with entropic regularization (see \cite{bunne2022proximal, makkuva2020optimal, mokrov2021large} and the references therein). Since the optimal transport for a Bregman cost becomes Euclidean after a coordinate transformation, we expect that similar techniques can be applied to implement Wasserstein gradient flows of measures over a Hessian manifold. (We note that an implementation for a specific diagonal metric is given in \cite{HCTM20}.) 
\item[(ii)] Investigation of other tractable costs. One of the main advantages of the Euclidean quadratic cost $|x - y|^2$ is Brenier's theorem which characterizes the optimal transport map as a convex gradient. For general costs, when the optimal transport map exists it is given as a $c$-gradient of a suitable $c$-convex potential. Whilst $c$-convexity is generally abstract and not easy to work with, for some cost functions $c$-convexity can be expressed in terms of ordinary convexity. For example, consider the Dirichlet optimal transport problem \cite{PW18b} on the open unit simplex $\Delta_n = \{ p = (0, 1)^n : \sum_i p^i = 1\}$ with cost function
\begin{equation} \label{eqn:log.cost}
c(p, q) = \log \left( \sum_{i = 1}^n \frac{1}{n} \frac{q^i}{p^i} \right) - \sum_{i = 1}^n \frac{1}{n} \log \frac{q^i}{p^i}.
\end{equation}
It can be shown that the Kantorovich potential can be expressed in terms of a concave function on $\Delta_n$ and hence the techniques in (i) and \cite{RW23} may apply.\footnote{Theorem \ref{thm:main} implies that the corresponding JKO-scheme converges to the Riemannian Fokker--Planck where the metric is induced by \eqref{eqn:log.cost}. This answers (negatively) the conjecture in \cite[Section 1.1]{PW18b} that the gradient flow is nonlocal.} Also see \cite{WZ22} which generalizes \eqref{eqn:log.cost} to a one-parameter family of logarithmic costs.
\item[(iii)] Relaxing technical conditions and extending to other PDEs. To establish convergence of the JKO scheme we impose several technical conditions. Some of these, including the two-sided comparison between $c(x, y)$ and $d^2(x, y)$ as well as A2, are restrictive,\footnote{We note that the former condition is also used in \cite{P24}.} so one may ask if they can be relaxed. Also, in this paper we consider only the Fokker--Planck equation which is the benchmark case of Wasserstein gradient flows. Extending the modified JKO scheme to other PDEs may stimulate applications of the theory to non-Euclidean spaces.
\item[(iv)] The effect of the A3 condition. Regularity for optimal transport costs requires a novel, but initially perplexing, condition known as A3 which was introduced by Ma, Trudinger, and Wang \cite{MTW05}. This condition was subsequently found to have numerous interesting interpretations and consequences \cite{KM10,R23,TW09}. The authors wonder if under this condition the associated smoothness of the optimal transport maps would lead to quantitative convergence rates.  
\end{itemize}

\appendix
\section{Weak formulations of the Fokker--Planck equation}
\label{sec:weak-form-heat}

We've relied on the following Lemmas concerning the weak form of the Fokker--Planck equation. The global result is taken from Zhang's work \cite{Zhang07} and the result on precompact subsets is a straight forward adaptation. 

\begin{lemma}\label{lem:compact-heat-eqn}
  Let $(N,g)$ be a smooth Riemannian manifold and $M \subset N$ be a precompact submanifold each satisfying the hypothesis of Theorem \ref{thm:main} (i). Let $\rho \in L^1_{\text{loc}}([0,\infty) \times M)$. Assume for every $\zeta \in C^\infty(M )$ satisfying the Neumann condition $\langle \nabla \zeta(x), {\bf n}(x)\rangle = 0$ on $\partial M,$  and $\eta \in C^\infty_c([0,+\infty))$ there holds
  \begin{align*}\label{lem:comp-heat-eqn}
    0= -\eta(0) \int_{M } \zeta(x) &\rho_0(x) \dV - \int_0^\infty \int_{M } \zeta(x)\partial_t\eta(t) \rho(t,x) \dV \, \dd t\\
    &-\int_0^\infty \int_{M }( \Delta \zeta(x) - \langle \nabla \psi, \nabla \zeta \rangle )\eta(t) \rho(t,x) \dV \, \dd t.
  \end{align*}
  Then $\rho$ is the unique smooth solution of the Fokker--Planck equation satisfying the initial condition $\rho(0,x) = \rho_0$ and the Neumann boundary condition. 
\end{lemma}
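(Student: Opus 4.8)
The plan is to read the stated integral identity as the distributional formulation of the initial--boundary value problem for the Fokker--Planck operator $L\rho := \Delta\rho + \div(\rho\nabla\psi)$, whose formal $L^2(\dV)$-adjoint is $L^{*}\zeta = \Delta\zeta - \langle\nabla\psi,\nabla\zeta\rangle$, which is exactly the operator applied to the test function $\zeta$. I would proceed in three stages: first upgrade the distributional solution $\rho$ to a smooth function by parabolic regularity; then integrate by parts to recover the classical equation, the no-flux Neumann condition, and the initial condition; and finally prove uniqueness by a weighted energy estimate.

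For the regularity step, I first restrict to test functions $\zeta\in C_c^\infty(M)$ supported in the interior together with $\eta\in C_c^\infty((0,\infty))$; then the boundary and initial terms drop and the identity says precisely that $\rho$ is a distributional solution of $\partial_t\rho = L\rho$ on $(0,\infty)\times M$. Since $\overline{M}$ is compact and $\psi$ is smooth, $L$ is a uniformly parabolic operator with smooth, bounded coefficients, hence hypoelliptic, and interior parabolic Schauder theory gives $\rho\in C^\infty((0,\infty)\times M)$; boundary regularity up to $(0,\infty)\times\partial M$ follows from the corresponding Schauder theory for the Neumann (oblique derivative) problem. With $\rho$ now smooth I integrate the full identity by parts twice in space: Green's identity together with $\partial_n\zeta=0$ on $\partial M$ leaves only the boundary integral $\int_0^\infty\!\int_{\partial M}\zeta\,\eta\,\langle\nabla\rho+\rho\nabla\psi,\mathbf{n}\rangle\,\dd S\,\dd t$, while integration by parts in $t$ leaves a term proportional to $\eta(0)\int_M\zeta\,(\rho(0)-\rho_0)\,\dV$. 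Since the interior equation already holds, these residual terms must vanish for all admissible $\zeta,\eta$; as $\zeta$ is unconstrained on $\partial M$ (the condition $\partial_n\zeta=0$ does not fix boundary values) and $\eta(0)$ is free, I conclude the no-flux condition $\langle\nabla\rho+\rho\nabla\psi,\mathbf{n}\rangle=0$ on $\partial M$ and the initial condition $\rho(0)=\rho_0$ in the trace sense.

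For uniqueness, suppose $\rho_1,\rho_2$ are two smooth solutions with the same data and set $w=\rho_1-\rho_2$, which solves $\partial_t w=Lw$ with $w(0)=0$ and the homogeneous no-flux condition. The cleanest estimate uses the substitution $u := e^{\psi}w$, under which $Lw=\div\!\big(e^{-\psi}\nabla u\big)$ and the no-flux condition becomes $\partial_n u=0$; differentiating the weighted norm, using the equation and then this boundary condition gives
\[
\tfrac12\tfrac{\dd}{\dd t}\int_M u^2\,e^{-\psi}\,\dV = \int_M u\,\div\!\big(e^{-\psi}\nabla u\big)\,\dV = -\int_M |\nabla u|^2\,e^{-\psi}\,\dV \le 0,
\]
so the weighted $L^2$-norm is nonincreasing; since it vanishes at $t=0$ it vanishes for all $t$, forcing $w\equiv 0$.

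I expect the main obstacle to be the regularity theory up to the parabolic boundary together with the matching of the (merely finite-entropy) initial data, rather than any algebra: interior smoothness is a routine consequence of hypoellipticity, and the energy argument is short once the self-adjoint form $e^{-\psi}\partial_t u=\div(e^{-\psi}\nabla u)$ is exploited. The delicate points are invoking boundary Schauder estimates for the Neumann problem, which rely on the smoothness of $\partial M$ assumed in Theorem~\ref{thm:main}(i), and justifying the integrations by parts near $t=0$ so that the traces $\rho(0)$ and $\langle\nabla\rho+\rho\nabla\psi,\mathbf{n}\rangle$ are well defined; this is precisely where one leans on the cited results of Zhang adapted to the bounded setting.
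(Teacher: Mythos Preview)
Your proposal is correct and follows essentially the same route the paper takes: the paper does not give a self-contained proof of this lemma but declares it ``standard'' and points to parabolic regularity (Friedman), the bootstrap argument in Zhang, and the uniqueness argument there, with the precompact case following by the same techniques. Your three-stage outline (interior hypoellipticity and boundary Schauder for smoothness, integration by parts to recover the no-flux and initial conditions, and the weighted energy estimate $e^{-\psi}\partial_t u = \div(e^{-\psi}\nabla u)$ for uniqueness) is exactly a fleshed-out version of that sketch, and your identification of the delicate points---boundary regularity and the trace at $t=0$---matches where the paper defers to the cited references.
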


\begin{lemma}\label{lem:glob-heat-eqn}
Assume $(M,g)$ is a complete Riemannian manifold with Ricci curvature bounded below and $\psi \in C^\infty(M)$ satisfies $\Vert \nabla \psi \Vert \leq C(1+\psi)$.  Let $\rho \in L^1_{\text{loc}}([0,\infty) \times M )$ satisfy $\sup_{t}M(\rho) < \infty$ and $E(\rho),D(\rho) < \infty$. Assume for every $\zeta \in C_c^\infty(M )$ and $\eta \in C^\infty_c([0,+\infty))$ there holds
   \begin{align*}
    0= -\eta(0) \int_{M } \zeta(x) &\rho_0(x) \dV - \int_0^\infty \int_{M } \zeta(x)\partial_t\eta(t) \rho(t,x) \dV \, \dd t\\
    &-\int_0^\infty \int_{M } (\Delta \zeta(x) -\langle \nabla \psi, \nabla \zeta\rangle )\eta(t) \rho(t,x) \dV \, \dd t.
   \end{align*}
   Then $\rho$ is the unique smooth solution of the Fokker--Planck equation with initial condition $\rho_0$ and finite drift and second moment. 
 \end{lemma}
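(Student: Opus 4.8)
The plan is to separate the statement into two independent assertions and prove each in turn: first, that the distributional solution $\rho$ is in fact smooth and classical on $(0,\infty)\times M$; and second, that among classical solutions with finite drift and second moment there is at most one. The regularity part is purely local and follows from standard parabolic theory, whereas the uniqueness part is the genuinely delicate one, since on the non-compact $M$ every integration-by-parts identity produces boundary terms at infinity that must be shown to vanish.

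For the regularity I would read the hypothesis as saying exactly that $\rho$ is a distributional solution of $\partial_t\rho = \Delta\rho + \div(\rho\nabla\psi)$ on $(0,\infty)\times M$, the term $-\eta(0)\int_M\zeta\rho_0\,\dV$ encoding the initial value. In any coordinate chart the operator $\partial_t - \Delta - \div(\,\cdot\,\nabla\psi)$ is uniformly parabolic with smooth coefficients (its principal part is $\Delta$, and $\psi\in C^\infty$), hence hypoelliptic; applying interior Schauder or Nash--Moser estimates chart by chart bootstraps $\rho$ from $L^1_{\mathrm{loc}}$ to $C^\infty\big((0,\infty)\times M\big)$. Thus $\rho$ is a classical solution, and the $\eta(0)$ term shows $\rho_0$ is attained as the initial datum. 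No global geometry is used here.

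For uniqueness I would exploit the self-adjoint form of the equation. Setting $d\mu = e^{-\psi}\,\dV$ and $u = e^{\psi}\rho$, the Fokker--Planck equation becomes the weighted heat equation $\partial_t u = \Delta_\mu u$, where $\Delta_\mu = \Delta - \langle\nabla\psi,\nabla\,\cdot\,\rangle$ is self-adjoint in $L^2(\mu)$. Given two solutions $\rho^{(1)},\rho^{(2)}$ with the stated properties and a common initial datum, the difference $w = \rho^{(1)}-\rho^{(2)}$ corresponds to $u = e^\psi w$ solving $\partial_t u = \Delta_\mu u$ with $u(0)=0$, and the finiteness of mass and drift gives $\int_M(1+\psi)|w|\,\dV < \infty$, equivalently $\int_M(1+\psi)|u|\,d\mu < \infty$. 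I would then run an $L^1(\mu)$-contraction: test against $\mathrm{sgn}(u)\,\chi_R$ for a cutoff $\chi_R = \theta(d(\cdot,x_0)/R)$ with $\chi_R\equiv 1$ on $B(x_0,R)$, supported in $B(x_0,2R)$, and use Kato's inequality $\mathrm{sgn}(u)\Delta_\mu u\le\Delta_\mu|u|$ together with the self-adjointness of $\Delta_\mu$ to obtain
\begin{equation*}
 \frac{\dd}{\dd t}\int_M |u|\,\chi_R\,d\mu \;\le\; \int_M |u|\,\Delta_\mu\chi_R\,d\mu .
\end{equation*}

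The crux, and the main obstacle, is showing the right-hand side vanishes as $R\to\infty$, and this is precisely where completeness, the Ricci lower bound, and the growth condition $\|\nabla\psi\|\le C(1+\psi)$ all enter. The term $\Delta_\mu\chi_R = \Delta\chi_R - \langle\nabla\psi,\nabla\chi_R\rangle$ is supported on the annulus $B(x_0,2R)\setminus B(x_0,R)$; the Laplacian comparison theorem under the Ricci lower bound bounds $\Delta d$, hence $|\Delta\chi_R|\le C/R$ there, while $|\langle\nabla\psi,\nabla\chi_R\rangle|\le \|\nabla\psi\|/R\le C(1+\psi)/R$. Therefore
\begin{equation*}
 \left| \int_M |u|\,\Delta_\mu\chi_R\,d\mu \right| \;\le\; \frac{C}{R}\int_{B(x_0,2R)\setminus B(x_0,R)} (1+\psi)\,|u|\,d\mu ,
\end{equation*}
and the integral $\int_M(1+\psi)|u|\,d\mu = \int_M(1+\psi)|w|\,\dV$ is finite exactly by the finite mass and finite drift, so the right-hand side tends to $0$ (the volume control \eqref{eq:bg} furnished by the Ricci bound is what keeps this tail finite in the first place). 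Passing to the limit gives $\frac{\dd}{\dd t}\int_M|u|\,d\mu\le 0$, whence $\int_M|u(t)|\,d\mu\le\int_M|u(0)|\,d\mu = 0$ and $\rho^{(1)}\equiv\rho^{(2)}$. An equivalent route tests $w$ against the backward solution $e^{(T-t)\Delta_\mu}f$ for $f\in C_c^\infty(M)$, using stochastic completeness of the $\mu$-diffusion (again a consequence of the Ricci bound) to control the adjoint. Combining the two parts, and noting that the weak limit built in the main proof satisfies the moment and entropy bounds \eqref{eq:ub-mom-bound}--\eqref{eq:ub-pos-bound}, identifies $\rho$ as the unique classical solution with finite drift and second moment.
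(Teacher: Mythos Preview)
The paper does not actually prove this lemma: it declares both appendix lemmas ``standard'' and defers entirely to references (Zhang's bootstrap and uniqueness argument, Friedman's parabolic regularity), noting only that the condition $\Vert\nabla\psi\Vert\le C(1+\psi)$ is what drives uniqueness. Your proposal therefore supplies considerably more than the paper does, and the overall architecture---local hypoellipticity for regularity, then an $L^1(\mu)$-contraction via Kato's inequality and cutoffs for uniqueness---is a legitimate route in the spirit of the cited literature.

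That said, your uniqueness argument has a technical gap. The bound $|\Delta\chi_R|\le C/R$ for $\chi_R=\theta(d(\cdot,x_0)/R)$ does not follow from a Ricci lower bound alone. Laplacian comparison gives only the one-sided distributional inequality $\Delta d\le (n-1)\sqrt{K}\coth(\sqrt{K}\,d)$; at the cut locus $d$ is not smooth and $\Delta d$ carries a nonpositive singular part, so $\chi_R$ is merely Lipschitz and $\Delta\chi_R$ is a signed measure. Since $\theta'\le 0$, that singular part contributes with a \emph{positive} sign to $\Delta\chi_R$---precisely the direction that hurts you when paired against $|u|\ge 0$. A standard repair is to replace the raw distance by a smooth exhaustion function with controlled gradient and Laplacian (such functions exist on complete manifolds with Ricci bounded below), or to follow your own alternative suggestion and test against the backward $\Delta_\mu$-semigroup, invoking stochastic completeness. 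Either works, but as written the cutoff estimate is unjustified. You also never invoke the finite second moment hypothesis, which is part of the stated uniqueness class; you should check whether it is needed (e.g.\ to make $\int_M(1+\psi)|w|\,\dV$ finite uniformly in $t$, or to attain the initial datum) or is simply redundant given finite drift.
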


Both lemmas are standard. See, for example, the definition of weak solution in \cite[Definition 4.2]{Zhang07} as well as the bootstrap argument and uniqueness result \cite[Theorem 4.5]{Zhang07} for the global case. It is the uniqueness result that requires the condition $\Vert \nabla \psi \Vert \leq C(1+\psi)$.  The precompact case follows by similar techniques. One may also consult Friedman's book \cite[Ch. 3, Theorem 7]{Friedman1964} for the parabolic regularity theory and Figalli and Glaudo's book for more details in the Euclidean case \cite[Theorem 3.3.4]{FG21}.

\bibliographystyle{plain}
\bibliography{geometry.ref} 
\end{document}